\documentclass[12pt]{article}
\usepackage{amsmath}
\usepackage{graphicx}
\usepackage{enumerate}
\usepackage{natbib}
\usepackage{url} 
\usepackage[colorlinks,citecolor=blue,urlcolor=blue]{hyperref}
\usepackage{amsthm}
\usepackage{amsfonts}
\usepackage{bm}
\usepackage{booktabs}
\usepackage{multirow}
\usepackage[justification=centering]{caption}
\usepackage[plain,noend]{algorithm2e}
\usepackage{threeparttable}
\usepackage{longtable}
\usepackage{caption}
\usepackage{subcaption}
\usepackage{float}
\usepackage{chngcntr}
\usepackage{comment}
\usepackage{threeparttable}
\usepackage{cleveref}

\newcommand{\blind}{0}

\newtheorem{theorem}{Theorem}
\newtheorem{proposition}{Proposition}
\newtheorem{condition}{Condition}
\newtheorem{definition}{Definition}

\newtheorem{remark}{Remark}

\newcommand{\yi}[1][z]{Y_i(#1)}
\newcommand{\yj}[1][z]{Y_j(#1)}
\newcommand{\ymbar}[1][z]{\bar Y_{[m]}(#1)}
\newcommand{\ymbarhat}[1][z]{\bar Y_{[m] #1}}
\newcommand{\yimp}[1][z]{Y_i^{\mathrm{imp}}(#1)}

\def \yo{Y_i}

\newcommand{\varm}[1][]{S^2_{[m]Y(#1)}}
\def \varmdif{S^2_{[m]\tau}}
\newcommand{\varmhat}[1][]{s^2_{[m]Y(#1)}}

\def \covm{S_{[m]Y(1),Y(0)}}

\def \covmh{S_{[m]Y(1),Y(0)}^U}
\def \covmhhat{s_{[m]Y(1),Y(0)}^U}

\newcommand{\nm}[1][]{n_{[m] #1}}

\def \taub{\tau^*}

\def \taubhat{\hat \tau^*}

\def \yob{Y_i^*}

\def \taum{\tau_{[m]}}
\def \taumhat{\hat \tau_{[m]}}
\def \pim{\pi_{[m]}}
\def \rhom{\rho_{[m]}}

\def \sigmaAGL{\hat \sigma_S}

\def \sigmaneyman{\hat \sigma_N}

\def\asyequi{\xrightarrow[]{d}}

\def \i{\mathrm i}
\def \p{\mathrm{pair}}

\def \W{W}

\def \V{V}

\newcommand{\Gm}[1][y]{G_{[m]}(#1)}
\newcommand{\Gmhat}[1][y]{\hat G_{[m]}(#1)}
\newcommand{\Gminv}[1][u]{G_{[m]}^{-1}(#1)}
\newcommand{\Gminvhat}[1][u]{\hat G_{[m]}^{-1}(#1)}
\newcommand{\Fm}[1][y]{F_{[m]}(#1)}
\newcommand{\Fmhat}[1][y]{\hat F_{[m]}(#1)}
\newcommand{\Fminv}[1][u]{F_{[m]}^{-1}(#1)}
\newcommand{\Fminvhat}[1][u]{\hat F_{[m]}^{-1}(#1)}
\newcommand{\Hm}[1][y_1,y_0]{H_{[m]}(#1)}

\newcommand{\HmH}[1][y_1,y_0]{H_{[m]}^U(#1)}
\newcommand{\HmHhat}[1][y_1,y_0]{\hat H_{[m]}^U(#1)}
\newcommand{\HH}[1][y_1,y_0]{H^U(#1)}

\newcommand{\yma}[1][z]{Y_{[m]1}(#1)}
\newcommand{\ymb}[1][z]{Y_{[m]2}(#1)}
\newcommand{\ymaimp}[1][z]{Y_{[m]1}^{\mathrm{imp}}(#1)}
\newcommand{\ymbimp}[1][z]{Y_{[m]2}^{\mathrm{imp}}(#1)}
\newcommand{\ymiimp}[1][z]{Y_{[m]i}^{\mathrm{imp}}(#1)}
\newcommand{\ymao}[1][z]{Y_{[m]1}}
\newcommand{\ymbo}[1][z]{Y_{[m]2}}
\def \zm{Z_{[m]1}}

\addtolength{\oddsidemargin}{-.5in}%
\addtolength{\evensidemargin}{-1in}%
\addtolength{\textwidth}{1in}%
\addtolength{\textheight}{1.7in}%
\addtolength{\topmargin}{-1in}%

\usepackage{xcolor}

\begin{document}

\def\spacingset#1{\renewcommand{\baselinestretch}%
{#1}\small\normalsize} \spacingset{1}


\if1\blind
{
  \title{\bf }
  \author{Author 1\thanks{
    The authors gratefully acknowledge \textit{please remember to list all relevant funding sources in the unblinded version}}\hspace{.2cm}\\
    Department of YYY, University of XXX\\
    and \\
    Author 2 \\
    Department of ZZZ, University of WWW}
  \maketitle
} \fi

\if0\blind
{
  \title{\bf Sharp variance estimator and causal bootstrap in stratified randomized experiments}
  \author{Haoyang Yu \\
    Department of Statistics and Data Science, Tsinghua University\\
    Ke Zhu\thanks{Corresponding author: kzhu24@ncsu.edu.} \\
    Department of Statistics, North Carolina State University\\
    Department of Biostatistics and Bioinformatics, Duke University\\
    Hanzhong Liu\thanks{
    Dr. Liu was supported by the National Natural Science Foundation of China (12071242).}\\
    Department of Statistics and Data Science, Tsinghua University}
    \date{}
  \maketitle
} \fi

\bigskip
\begin{abstract}
Randomized experiments are the gold standard for estimating treatment effects, and randomization serves as a reasoned basis for inference.
In widely used stratified randomized experiments, randomization-based finite-population asymptotic theory enables valid inference for the average treatment effect, relying on normal approximation and a Neyman-type conservative variance estimator.
However, when the sample size is small or the outcomes are skewed, the Neyman-type variance estimator may become overly conservative, and the normal approximation can fail.
To address these issues, we propose a sharp variance estimator and two causal bootstrap methods to more accurately approximate the sampling distribution of the weighted difference-in-means estimator in stratified randomized experiments. 
The first causal bootstrap procedure is based on rank-preserving imputation and we prove its second-order refinement over normal approximation. The second causal bootstrap procedure is based on constant-treatment-effect imputation and is further applicable in paired experiments. 
In contrast to traditional bootstrap methods, where randomness originates from hypothetical super-population sampling, our analysis for the proposed causal bootstrap is randomization-based, relying solely on the randomness of treatment assignment in randomized experiments.
Numerical studies and two real data applications demonstrate advantages of our proposed methods in finite samples.
The \texttt{R} package \texttt{CausalBootstrap} implementing our method is publicly available.
\end{abstract}

\noindent%
{\it Keywords:} causal inference; design-based inference; randomized block experiment; randomization-based inference; second-order refinement.
\vfill

\newpage
\spacingset{1.9} 
\section{Introduction}
\label{s:intro}

Stratified randomized experiments have been widely employed in biomedical science, economics and social science to investigate the effects of an intervention. 
Compared to completely randomized experiments, stratification or blocking can improve estimation efficiency by balancing key pre-treatment covariates \citep{Fisher1926}. 
Recently, stratified randomized experiments have been extensively examined under the Neyman--Rubin potential outcomes model \citep{Rubin:1974,splawa1990application} and the randomization-based, or design-based finite-population, inference framework \citep{imbens2015causal,athey2017econometrics, mukerjee2018,fogarty2018,Liu2019,Pashley2017,schochet2022design,Wang2020rerandomization}. 
In the randomization-based inference framework, the randomized treatment assignment serves as the sole basis for statistical inference, a principle advocated by both \citet{fisher1935} and \citet{splawa1990application} for analyzing randomized experiments. This approach has recently garnered renewed attention in clinical trial analysis \citep{imai2008pair,rosenberger2015randomization,proschan2019re,rosenberger2019randomization,wang2020randomization,wang2020randomization2,zhu2023pair,heussen2024randomization,carter2024regulatory} due to its assumption-lean nature. Specifically, its validity relies on randomization, which is typically controlled, rather than on independent and identically distributed sampling from a super-population--an often unrealistic condition given the nonrandom nature of recruitment. Additionally, this framework does not depend on model assumptions, making it robust to model misspecification. Furthermore, randomization-based finite-population inference can be connected with super-population inference \citep{ding2017bridging}, allowing the two approaches to complement each other.
In the randomization-based framework, the sample average treatment effect (SATE) can be inferred using finite-population central limit theory alongside the Neyman-type variance estimator \citep{imbens2015causal,li2016,Liu2019}. However, this variance estimator tends to be conservative, resulting in wide confidence intervals. Furthermore, the asymptotic normal approximation may not be valid for small to moderate sample sizes, which are typical in high-cost randomized experiments.

Our first contribution is the proposal of a sharp variance estimator for the weighted difference-in-means in stratified randomized experiments, along with the establishment of its theoretical properties.
We construct the sharp variance estimator applying the sharp variance estimator introduced by \citet{aronow2014} within each stratum.
While the methodological extension is straightforward, handling stratified randomized experiments with both small and large strata poses a theoretical challenge.
We establish the formal theoretical results for the proposed sharp variance estimator, allowing either the number or size of strata to tend to infinity. 
The key technical difference between our work and the existing literature is the derivation of a concentration inequality for the stratum-size-weighted empirical distribution function, utilizing a fundamental inequality for sampling without replacement \citep{bobkov2004}.
We then prove a Glivenko--Cantelli type theorem for stratified randomized experiments, determining the convergence rate for the supremum of the difference between the empirical and population distribution functions.
Using the proposed sharp variance estimator, we construct the asymptotically narrowest valid Wald-type confidence interval for the average treatment effect (ATE) in these experiments.

Normal approximation combined with the sharp variance estimator can yield anti-conservative results in finite-sample simulations, particularly when the potential outcomes exhibit co-monotonicity \citep{aronow2014}. To enhance finite-sample performance, we propose a causal bootstrap method.
The bootstrap method \citep{Efron1979} is widely recognized for achieving second-order refinement in various statistical models \citep{hall2013bootstrap,wang2022bootstrap}.
Existing bootstrap methods in causal inference, however, typically operate within the super-population framework, where randomness arises from both sampling processes and treatment assignments \citep{shao2010theory,shao2013validity,zhang2020quantile,jiang2024bootstrap,kosko2024fast}.
In finite-population settings, where the sole source of randomness is the treatment assignment, the effectiveness of these methods remains unclear. To further explore this scenario, \citet{imbens2018causal} introduced a design-based causal bootstrap under the finite-population framework. Their approach, based on rank-preserving imputation for unobserved potential outcomes, established theoretical validity and second-order refinement.
Despite these advancements, the theoretical properties of the causal bootstrap are not fully understood in stratified randomized experiments. 
Moreover, the causal bootstrap based on rank-preserving imputation does not work in paired experiments.

Our second contribution is the establishment of consistency and second-order refinement for the causal bootstrap in stratified randomized experiments. 
We apply rank-preserving imputation within each stratum to generate a bootstrap distribution for pivotal statistics through stratified randomization. 
We then provide a theoretical guarantee for the causal bootstrap based on rank-preserving imputation, accommodating cases where both the number of strata and stratum sizes tend to infinity. 
Furthermore, by leveraging the theoretical insights from finite-population Edgeworth expansion \citep{babu1985Edgeworth,hall2013bootstrap,imbens2018causal}, we demonstrate that the causal bootstrap method offers a second-order refinement over the normal approximation. 
These contributions enhance the reliability of causal inference in complex experimental designs.

Our third contribution is the proposal of a novel causal bootstrap method based on constant-treatment-effect imputation, specifically designed for paired experiments.
Unlike \citet{imbens2018causal}'s causal bootstrap, which relies on rank-preserving imputation and requires each stratum to have at least two treated and two control units (a condition unmet in paired experiments), we impute unobserved potential outcomes by assuming a constant treatment effect for the bootstrap sample. 
We then generate a bootstrap distribution for pivotal statistics according to the actual experimental design and construct confidence intervals using the bootstrap quantile.
This approach is inspired by \citet{bind2020possible}, who emphasize the importance of examining the null randomization distribution for scientific insights, yet do not detail how to formally integrate it into statistical inference.
We validate the proposed causal bootstrap method based on constant-treatment-effect imputation and demonstrate its advantages through simulation studies and real data applications. Importantly, by utilizing a pivotal statistic, this causal bootstrap remains valid even when the constant treatment effect assumption does not hold for the true potential outcomes.

The paper proceeds as follows.
Section \ref{sec:sre} reviews design-based causal inference in stratified randomized experiments. Section \ref{sec:var} describes the sharp variance estimator and its asymptotic properties. In Section \ref{sec:bootrank}, we propose the causal bootstrap based on rank-preserving imputation and its theoretical properties in stratified randomized experiments. Section \ref{sec:bootsharp} provides the causal bootstrap method based on constant-treatment-effect imputation and its theoretical guarantees for paired experiments. Sections \ref{sec:sim} and \ref{sec:rd} present simulation studies and two real data applications, respectively. Finally, we conclude the paper in Section \ref{sec:dis}.

\section{Stratified Randomized Experiments}
\label{sec:sre}

We consider a design-based finite-population framework.
Following the Neyman--Rubin potential outcome model \citep{Rubin:1974,splawa1990application} and the stable unit treatment value assumption (SUTVA) \citep{Rubin1980} that for $i=1,\ldots,n$, each unit $i$ has two potential outcomes: $\yi[1]$ and $\yi[0]$, representing the outcomes if unit $i$ is allocated to the treatment or control group, respectively. 
Let $Z_i$ denote the treatment assignment, taking the value $1$ if unit $i$ is assigned to the treatment group and $0$ otherwise.
The observed outcome, denoted as $\yo$, can be expressed as $\yo = Z_i\yi[1] + (1-Z_i)\yi[0]$.
In the design-based finite-population inference framework,
randomness arises solely from the treatment assignment process and both $\yi[1]$ and $\yi[0]$ are considered fixed. 
Our goal is to infer the average treatment effect (ATE) $\tau=n^{-1}\sum_{i=1}^n\tau_i$, where $\tau_i=\yi[1]-\yi[0]$ represents the unit-level treatment effect. 

In stratified randomized experiments, the population is divided into $M$ strata based on pre-treatment covariates, such as gender. 
Each stratum consists of $\nm$ units, with the total population size given by $\sum_{m=1}^M\nm=n$. 
Within each stratum, complete randomization is performed, assigning $\nm[1]$ units to the treatment group and $\nm[0] = \nm - \nm[1]$ units to the control group. 
The probability that the treatment assignment vector $Z=(Z_1,\cdots, Z_n)$ takes a specific value $(z_1,\cdots,z_n)$ is given by $\prod_{m=1}^M (\nm[1]! \nm[0]!/\nm!)$, where $z_i \in {0,1}$, and $\sum_{i\in [m]}z_i=\nm[1]$ for $m=1,\cdots,M$, with $i \in [m]$ denoting units in stratum $m$. 

The average treatment effect (ATE) can be expressed as $\tau=\sum_{m=1}^M\pim\taum$, where $\pim=\nm/n$ is the proportion of of the population in stratum $m$, and $\taum=\nm^{-1}\sum_{i\in[m]}\{\yi[1]-\yi[0]\}$ is the ATE in the $m$-th stratum. 
We estimate $\taum$ using the difference-in-means estimator $\taumhat=\ymbarhat[1]-\ymbarhat[0]={\nm[1]}^{-1}\sum_{i\in[m]}Z_i\yo-{\nm[0]}^{-1}\sum_{i\in[m]}(1-Z_i)\yo$. 
The overall estimator for $\tau$ is the weighted mean of the stratum-specific estimators: $\hat \tau=\sum_{m=1}^M\pim\taumhat$.

To incorporate the sharp variance estimator and causal bootstrap methods, we introduce additional notations for the distribution functions. 
Within stratum $m$, the finite-population distribution functions for $\yi[1]$ and $\yi[0]$ are denoted by $\Gm=\nm^{-1} \sum_{i\in[m]}I\{\yi[1]\leq y\}$ and $\Fm=\nm^{-1} \sum_{i\in[m]}I\{\yi[0]\leq y\}$, respectively, where $I(\cdot)$ denotes the indicator function. 
The overall marginal distribution functions for $\yi[1]$ and $\yi[0]$ are denoted by $G(y)=\sum_{m=1}^M \pim \Gm$ and $F(y)=\sum_{m=1}^M \pim \Fm$.

The stratum-level ATE $\taum$ can be formulated as a functional: $\taum = \W(G_{[m]},F_{[m]}) = E_{G_{[m]}}\{\yi[1]\} - E_{F_{[m]}}\{\yi[0]\}$. Similarly, the overall ATE is given by $\tau = \W(G,F)=E_{G}\{\yi[1]\} - E_{F}\{\yi[0]\}$.
The stratum-level difference-in-means estimator $\taumhat$ can be expressed as $\taumhat=W(\hat G_{[m]},\hat F_{[m]})$, where $\Gmhat=\nm[1]^{-1} \sum_{i\in[m]}Z_iI\{\yi[1]\leq y\}$ and $\Fmhat=\nm[0]^{-1}\sum_{i\in[m]}(1-Z_i)I\{\yi[0]\leq y\}$ are the empirical distribution functions of $\yi[1]$ and $\yi[0]$ in stratum $m$, respectively. 
Similarly, $\hat \tau=W(\hat G,\hat F)$, where $\hat G(y)$ and $\hat F(y)$ denote the weighted means of $\Gmhat$ and $\Fmhat$ (weighted by $\pim$), respectively.

The variance of $\sqrt n( \hat{\tau} - \tau) $ is given by 
\begin{align}
    \sigma^2=n\sum_{m=1}^M\pim^2 \left(\frac{\varm[1]}{\nm[1]}+\frac{\varm[0]}{\nm[0]}-\frac{\varmdif}{\nm} \right) \label{sigma2},
\end{align}
where $\varm[z]$ ($z=0,1$) and $\varmdif$ are the finite-population variances of $\yi[z]$ and $\tau_i$, respectively. Specifically, $\varm[z]= (\nm-1)^{-1}\sum_{i\in [m]}\{\yi[z]-\ymbar\}^2$, where $\ymbar=\nm^{-1}\sum_{i\in[m]} \yi[z]$ for $z=0,1$. The term $\varmdif=(\nm - 1)^{-1}\sum_{i\in[m]}(\tau_i-\taum)^2$ captures the variation of individual treatment effects $\tau_i$'s within stratum $m$. By definition, $\varmdif=\varm[1]+\varm[0]-2\covm$, where 
$\covm=(\nm-1)^{-1}\sum_{i\in[m]}\{\yi[1]-\ymbar[1]\}\{\yi[0]-\ymbar[0]\}$.
Thus, the variance $\sigma^2$ can also be expressed as
\begin{align}
    \sigma^2&=\sum_{m=1}^M\pim\left(\frac{\nm[0]}{\nm[1]}\varm[1]+\frac{\nm[1]}{\nm[0]}\varm[0]+2\covm \right).
    \label{sigma2'}
\end{align}
We denote the joint distribution of potential outcomes in the $m$-th stratum as $\Hm=\nm^{-1}\sum_{i\in[m]}I\{\yi[1]\leq y_1,\yi[0]\leq y_0\}$ and in the whole population as $H(y_1,y_0)=n^{-1}\sum_{i=1}^nI\{\yi[1]\leq y_1,\yi[0]\leq y_0\}$. The variance $\sigma^2$ can be given by $\sigma^2=\sum_{m=1}^M\pim\V(G_{[m]},F_{[m]},H_{[m]})$, where $\V$ is a functional.

When $2\leq \nm[1]\leq \nm-2$, we estimate $\varm[z]$ by its sample analog $\varmhat[z]=(\nm[z]-1)^{-1}\sum_{i\in[m]}I(Z_i=z)\left(Y_i-\ymbarhat[z] \right)^2$.
However, estimating $\covm$ poses a challenge, since $\yi[1]$ and $\yi[0]$ cannot be observed simultaneously. Conservative estimators can be derived by estimating an upper bound for $\covm$.
\citet{splawa1990application} proposed using $\{\varm[1]+\varm[0]\}/2$ as an upper bound for $\covm$, leading to the Neyman-type variance estimator:
\begin{align*}
\sigmaneyman^2=n\sum_{m=1}^M\pim^2\left(\frac{\varmhat[1]}{\nm[1]}+\frac{\varmhat[0]}{\nm[0]} \right),
\end{align*}
where the subscript ``N'' stands for ``Neyman''. Under Conditions \ref{cond1}--\ref{cond3} below, $\hat \tau$ is asymptotically normal, and the Neyman-type variance estimator is conservative \citep{Liu2019}. Let $\rhom=\nm[1]/\nm$ denote the propensity score in stratum $m$, which is known by design.

\begin{condition}
    \label{cond1}
    There exist constants $\rhom^{\infty}$ and $C\in(0,0.5)$ such that $C<\min_{m=1,\cdots,M}\rhom^{\infty}\leq \max_{m=1,\cdots,M}\rhom^{\infty}<1-C$ and $\max_{m=1,\cdots,M}|\rhom- \rhom^{\infty}|\rightarrow 0$, as $n \rightarrow \infty$.
\end{condition}

\begin{condition}
    \label{cond2}
    As $n \rightarrow \infty$, the maximum stratum-specific squared distances divided by $n$ tend to zero, i.e.,  $n^{-1}\max_{m=1,\cdots,M}\max_{i\in[m]}\left\{\yi-\ymbar \right\}^2\rightarrow 0$ for $z = 0,1$. 
\end{condition}

\begin{condition}
    \label{cond3}
    The weighted variances 
$\sum_{m=1}^M\pim\varm[1]/\rhom$, $\ 
\sum_{m=1}^M\pim\varm[0]/(1-\rhom)$, and 
$\sum_{m=1}^M\pim\varmdif$, 
tend to finite limits, which are positive for the first two. The limit of $\sum_{m=1}^M\pim\{\varm[1]/\rhom+\varm[0]/(1-\rhom)-\varmdif\}$ is positive.
\end{condition}

Condition~\ref{cond1} stipulates that the stratum-specific propensity scores $\rhom$ are uniformly bounded away from zero and one. 
Condition~\ref{cond2} serves as a moment-type condition, which is less stringent than requiring a bounded $(2+\delta)$-th moment condition for some $\delta>0$. This condition is important for establishing finite-population asymptotic theory, as noted in previous studies \citep{li2016,Liu2019,schochet2022design}.
Condition~\ref{cond3} provides a sufficient condition for $\sigma^2$ to have a finite and positive limit, ensuring that the variance remains well-defined and meaningful in the context of our analysis.

\begin{proposition}
    \label{CLT}
    If Conditions \ref{cond1}--\ref{cond3} hold, the standard statistic $\sqrt n(\hat \tau-\tau)/\sigma$ converges in distribution to a standard normal distribution, denoted as $\mathcal{N}(0,1)$. 
    Furthermore, if $2 \leq \nm[1] \leq \nm-2$ for $m=1,\dots,M$, the Neyman-type variance estimator $\sigmaneyman^2$ converges in probability to the limit of $\sigma^2 + \sum_{m=1}^M\pim \varmdif $.
\end{proposition}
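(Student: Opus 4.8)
The plan is to prove the two assertions in turn. For the central limit theorem, I would recast $\sqrt n(\hat \tau-\tau)$ as a single linear statistic in the stratified assignment vector $Z$. Writing $\tilde Y_i(z)=\yi-\ymbar$ and using $\sum_{i\in[m]}\tilde Y_i(z)=0$, the stratum-level error collapses to $\taumhat-\taum=\sum_{i\in[m]}a_{mi}Z_i$ with coefficients $a_{mi}=\tilde Y_i(1)/\nm[1]+\tilde Y_i(0)/\nm[0]$ that themselves sum to zero within each stratum. Hence $\sqrt n(\hat \tau-\tau)=\sum_{m=1}^M\sum_{i\in[m]}(\sqrt n\,\pim a_{mi})Z_i$ is a linear combination of the entries of $Z$, which is uniform over assignments with $\nm[1]$ treated units per stratum and independent across strata. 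By the variance formula \eqref{sigma2}, this statistic has variance exactly $\sigma^2$, whose limit is positive and finite under Condition~\ref{cond3}.

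It then remains to verify the Lindeberg-type regularity condition of a finite-population (combinatorial) central limit theorem, namely that the largest squared coefficient is negligible relative to the total variance. Since $\sqrt n\,\pim a_{mi}=n^{-1/2}\{\tilde Y_i(1)/\rhom+\tilde Y_i(0)/(1-\rhom)\}$, Condition~\ref{cond1} bounds the propensity factors away from $0$ and $1$, so each squared coefficient is at most a constant multiple of $n^{-1}\{\tilde Y_i(1)^2+\tilde Y_i(0)^2\}$. Maximizing over all $(m,i)$ and applying Condition~\ref{cond2}, which forces $n^{-1}\max_{m}\max_{i\in[m]}(\yi-\ymbar)^2\to0$ for $z=0,1$, shows the maximal coefficient vanishes; combined with the positive variance limit from Condition~\ref{cond3}, asymptotic normality follows. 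The main obstacle is that the design simultaneously permits $M\to\infty$ with bounded stratum sizes and fixed $M$ with $\nm\to\infty$, so that neither a purely between-stratum nor a purely within-stratum argument works in both regimes; casting the estimator as one stratified linear permutation statistic and controlling the single maximal-coefficient quantity over all $(m,i)$ handles both at once, which is precisely why Condition~\ref{cond2}, a bound on the maximal individual contribution across the entire population, is the right moment condition.

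For the probability limit of $\sigmaneyman^2$, I would first compute its exact finite-sample mean. Under complete randomization within each stratum, $\varmhat[z]$ is unbiased for $\varm[z]$, so, recalling $n\pim^2/\nm=\pim$ and comparing with \eqref{sigma2},
\begin{align*}
E[\sigmaneyman^2]=n\sum_{m=1}^M\pim^2\left(\frac{\varm[1]}{\nm[1]}+\frac{\varm[0]}{\nm[0]}\right)=\sigma^2+n\sum_{m=1}^M\pim^2\frac{\varmdif}{\nm}=\sigma^2+\sum_{m=1}^M\pim\varmdif.
\end{align*}
By Condition~\ref{cond3} this mean converges to the stated limit, so it suffices to show $\var(\sigmaneyman^2)\to0$ and apply Chebyshev's inequality. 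Because the stratum sample variances are independent across $m$, $\var(\sigmaneyman^2)$ is the sum of the stratum-wise variances, and I would bound each $\var(\varmhat[z])$ by a fourth-moment quantity of the centered outcomes. Condition~\ref{cond2} controls the maximal centered squared outcome by $o(n)$, which dominates the fourth central moments, and together with the boundedness of the weighted variances from Condition~\ref{cond3} this makes the stratum-proportion-weighted sum of variances vanish. This last bookkeeping---uniform fourth-moment control that must remain valid whether normality is driven by many small strata or a few large ones---is the delicate part of this half of the argument. Combining the vanishing variance with the exact mean yields convergence in probability to the limit of $\sigma^2+\sum_{m=1}^M\pim\varmdif$.
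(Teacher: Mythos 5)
First, a point of reference: the paper does not prove this proposition at all --- it is imported verbatim from \citealp{Liu2019} --- so your attempt can only be compared with the standard argument in that reference, not with a proof inside this paper.

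Your second half (the limit of $\sigmaneyman^2$) is sound and matches the standard route: unbiasedness of $\varmhat[z]$ for $\varm[z]$ under within-stratum complete randomization gives the exact identity $E(\sigmaneyman^2)=\sigma^2+\sum_{m=1}^M\pim\varmdif$ (your algebra $n\pim^2/\nm=\pim$ is right), and the Chebyshev step goes through because $\var\{\varmhat[z]\}\le C\,\nm[z]^{-1}\nm^{-1}\sum_{i\in[m]}\{\yi-\ymbar\}^4$ for sampling without replacement, the fourth moments are dominated by $\max_{m,i}\{\yi-\ymbar\}^2$ times the stratum variance, and $\pim^2/\nm=\pim/n$, so Conditions~\ref{cond2} and \ref{cond3} yield $\var(\sigmaneyman^2)\le C\,\{n^{-1}\max_{m,i,z}(\yi-\ymbar)^2\}\{\sum_{m}\pim\varm[z]\}\to 0$. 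The only thing you should make explicit is the variance-of-sample-variance bound for sampling without replacement, which is standard.

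The genuine gap is in the CLT half. Your reduction $\sqrt n(\hat\tau-\tau)=\sum_{m}\sum_{i\in[m]}\sqrt n\,\pim a_{mi}Z_i$ and the verification that $\max_{m,i}\,n\pim^2a_{mi}^2\to 0$ under Conditions~\ref{cond1}--\ref{cond2} are correct, but the theorem you then invoke --- a combinatorial CLT whose hypothesis is that the largest squared coefficient is negligible relative to the total variance --- is a statement about a \emph{single} permutation (Hoeffding/H\'ajek). Here the statistic is a sum of $M$ \emph{independent within-stratum} permutation statistics $T_{[m]}=\sum_{i\in[m]}\pim a_{mi}Z_i$, and for that stratified sum, maximal-coefficient negligibility alone does not deliver the Lindeberg--Feller condition: for a stratum with $\nm\asymp n$, $|T_{[m]}|$ is not controlled by its largest coefficient and its variance share $\var(T_{[m]})/\var(\hat\tau)$ need not vanish, so the indicator in $E\{T_{[m]}^2 I(|T_{[m]}|>\epsilon s_n)\}$ cannot be killed; while for intermediate strata ($\nm\to\infty$ but $\pim\to 0$) the crude bound $|T_{[m]}|\le \nm\max_i|\pim a_{mi}|$ is useless. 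The missing step --- which is precisely the content of the cited result --- is a case decomposition: apply H\'ajek's CLT \emph{within} each stratum whose variance share does not vanish (your maximal-coefficient condition verifies the H\'ajek--Lindeberg hypothesis there), verify Lindeberg--Feller \emph{across} the remaining strata using fourth-moment bounds for permutation statistics together with the fact that their individual variance shares vanish, and then convolve the independent pieces (with a subsequence argument to make ``large stratum'' well defined). Alternatively, one cites a stratified finite-population CLT directly (Bickel and Freedman, 1984, \emph{Ann.\ Statist.}; or \citealp{Liu2019}), which is what this paper does. As written, your claim that controlling the single maximal coefficient ``handles both regimes at once'' asserts exactly the step that requires proof.
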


Proposition \ref{CLT} indicates that the sampling distribution of $\hat \tau$ can be well-approximated by the normal distribution, and the asymptotic variance can be conservatively estimated by $\sigmaneyman^2$ \citep{Liu2019}. A two-sided $1 - \alpha$ confidence interval for $\tau$ is $(\hat \tau-\sigmaneyman z_{1-\alpha/2}/\sqrt n,\hat \tau+\sigmaneyman z_{1-\alpha/2}/\sqrt n)$, where $z_{1-\alpha/2}$ is the lower $1-\alpha/2$ quantile of the standard normal distribution. The asymptotic coverage rate of this confidence interval is no less than $1-\alpha$ and is equal to $1-\alpha$ if and only if the treatment effects are stratum-specific additive, which means $\tau_i = \taum$ for all $i \in [m]$, or equivalently, $\varmdif = 0$.

\section{Sharp Variance Estimator}
\label{sec:var}

The Neyman-type variance estimator estimates the third term in \eqref{sigma2'} with an upper bound based on Cauchy-Schwarz inequality. To enhance this estimator, we derive a sharper upper bound for the third term and propose a consistent estimator for it.
Within stratum $m$, the sharp upper bound for $\covm$ is given by \citet{aronow2014}:
\begin{align*}
\covmh =\frac{\nm}{\nm-1}\left\{ \int_0^1 \Gminv \Fminv \mathrm du - \ymbar[1]\ymbar[0]\right\},
\end{align*}
where ``U'' indicates ``upper'', and $\Gminv$ and $\Fminv$ are the left-continuous inverse functions of the finite-population distribution functions $\Gm$ and $\Fm$, respectively.
By aggregating across all strata, we obtain the sharp upper bound for $\sigma^2$:
\begin{align*}
\sigma_S^2 = \sum_{m=1}^M\pim\left(\frac{\nm[0]}{\nm[1]}\varm[1] + \frac{\nm[1]}{\nm[0]}\varm[0] + 2\covmh \right),
\end{align*}
where ``S'' stands for ``sharp''. In the following part, we first provide a definition of co-monotonicity, then discuss the sharpness of $\sigma_S^2$. Finally, we demonstrate the consistency of our proposed estimator $\sigmaAGL^2$.

\begin{definition}[Co-monotonicity]
    Two subsets of $\mathbb R$ $ \{\yi[1]\}_{i=1}^n $ and $ \{\yi[0]\}_{i=1}^n $ are said to be co-monotonic if, for any pair of indices $ i $ and $ j $, $\{\yi[1]-\yj[1]\}\{\yi[0]-\yj[0]\}\geq 0$.
\end{definition}

The concept of co-monotonicity is widely used in many fields, including economics, finance, risk management and statistics \citep{dhaene2002concept,aronow2014,mcneil2015quantitative}. Based on this definition, we state the sharpness of $\sigma_S^2$ in the following proposition.

\begin{proposition}
    \label{prop:agl}
    Given $\{G_{[m]}(y)\}$ and $\{F_{[m]}(y)\}$ with no additional information about the joint distribution of $\{\yi[1], \yi[0]\}$, the bound $\sigma^2\leq \sigma_S^2$ is sharp. Equality holds if and only if $\yi[1]$ and $\yi[0]$ are co-monotonic in each stratum.
\end{proposition}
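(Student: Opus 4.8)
The plan is to exploit the additive structure of the variance across strata so that the global bound reduces to a single within-stratum statement. Since $\sigma^2=\sum_{m=1}^M\pim\left(\frac{\nm[0]}{\nm[1]}\varm[1]+\frac{\nm[1]}{\nm[0]}\varm[0]+2\covm\right)$ and $\sigma_S^2$ differs only by replacing each $\covm$ with $\covmh$, and since the marginal variances $\varm[1]$ and $\varm[0]$ are determined entirely by $G(y)$ and $F(y)$, both the inequality $\sigma^2\le\sigma_S^2$ and its equality case follow term by term from the single-stratum claim $\covm\le\covmh$, with equality iff the within-stratum coupling is co-monotone. Because every weight $\pim>0$, global equality holds iff stratum-wise equality holds in each of the $M$ strata, so it suffices to treat one stratum.

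First I would rewrite the covariance through its cross-moment, $\covm=\frac{\nm}{\nm-1}\{\nm^{-1}\sum_{i\in[m]}\yi[1]\yi[0]-\ymbar[1]\ymbar[0]\}$. Since $\ymbar[1]$ and $\ymbar[0]$ are fixed by the marginals, maximizing $\covm$ over all joint distributions $\Hm$ compatible with $\Gm$ and $\Fm$ is equivalent to maximizing the cross-sum $\nm^{-1}\sum_{i\in[m]}\yi[1]\yi[0]$ over all pairings of the fixed multisets $\{\yi[1]\}_{i\in[m]}$ and $\{\yi[0]\}_{i\in[m]}$. This is a finite assignment problem, and the Hardy--Littlewood rearrangement inequality shows the cross-sum is maximized by sorting the two sets into the same (increasing) order and pairing them. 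It then remains to identify this co-monotone cross-sum with $\int_0^1\Gminv\Fminv\mathrm du$: because $\Gminv$ and $\Fminv$ are the left-continuous quantile step functions of the two sorted sequences, their product is constant on matching subintervals and integrates to $\nm^{-1}\sum_{k=1}^{\nm}Y_{[m](k)}(1)Y_{[m](k)}(0)$, the cross-sum of the order statistics. This yields $\covm\le\covmh$ and exhibits the co-monotone coupling as an attaining configuration, establishing sharpness.

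For the equality characterization I would invoke the strict form of the rearrangement inequality, or equivalently Hoeffding's covariance identity $\covm\propto\int\int\{\Hm-\Gm[y_1]\Fm[y_0]\}\mathrm dy_1\mathrm dy_0$ (with positive constant $\nm/(\nm-1)$) together with the Fr\'echet--Hoeffding upper bound $\Hm\le\min\{\Gm[y_1],\Fm[y_0]\}$: the covariance attains its maximum iff the joint distribution equals this upper bound, which is exactly the co-monotone coupling $\{\yi[1],\yi[0]\}\sim\{\Gminv[U],\Fminv[U]\}$. The main obstacle I anticipate is the treatment of ties: when $\Gm$ or $\Fm$ has atoms, the maximizing unit-to-unit assignment need not be unique, so the equivalence must be stated at the level of the joint distribution $\Hm$ rather than of the pairing. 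The delicate points to verify are that the Fr\'echet--Hoeffding upper bound still pins down $\Hm$ uniquely in the presence of ties, so that ``co-monotone'' is well defined and the if-and-only-if is exact, and that the integral formula for $\covmh$ continues to coincide with the order-statistic cross-sum when quantile steps have unequal widths.
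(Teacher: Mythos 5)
Your proposal is correct, and at the top level it takes the same route as the paper: decompose $\sigma^2$ and $\sigma_S^2$ stratum by stratum, observe that the marginal-variance terms coincide, and reduce the bound, its sharpness, and the equality case (using $\pim>0$) to the within-stratum claim $\covm\leq\covmh$ with equality if and only if the within-stratum coupling is co-monotone. The difference is what happens after that reduction: the paper's proof is one line, invoking this covariance bound as a known result (Lemma~\ref{hoeffding}, a stratum-wise restatement of Tchen, 1980, Theorem 1, or Aronow et al., 2014, Lemma 1), whereas you prove it from first principles --- the rearrangement inequality solves the finite assignment problem and shows the sorted pairing attains the maximum cross-sum; the identification of $\int_0^1\Gminv\Fminv\,\mathrm du$ with the cross-sum of order statistics connects that maximum to $\covmh$; and Hoeffding's identity combined with the Fr\'echet--Hoeffding upper bound $H_{[m]}(y_1,y_0)\leq\min\{\Gm[y_1],\Fm[y_0]\}$ yields the if-and-only-if at the level of the joint distribution $H_{[m]}$, which correctly resolves the tie issue you flag (with ties, several pairings maximize the cross-sum, but they all induce the same joint distribution, namely the Fr\'echet--Hoeffding bound, so the characterization stated for $H_{[m]}$ is exact). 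This self-contained argument is essentially the standard proof underlying the results the paper cites, so the mathematical content agrees; what your version buys is transparency about the finite-population specifics (step quantile functions, unequal step widths under ties, and realizability of the co-monotone coupling as an actual pairing of units), at the cost of the brevity the paper achieves by citation.
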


Denoting $\HmH=\min\{G_{[m]}(y_1),F_{[m]}(y_0)\}$ and $\HH=\sum_{m=1}^M\pim\HmH$ \citep{aronow2014}, the sharp variance can be expressed as $\sigma_S^2=\sum_{m=1}^M\pim\V(G_{[m]},F_{[m]},H_{[m]}^U)$.
By replacing population-level quantities with sample counterparts, we obtain a consistent estimator for the sharp upper bound:
\begin{align*}
    \sigmaAGL^2=\sum_{m=1}^M\pim\left(\frac{\nm[0]}{\nm[1]}\varmhat[1]+\frac{\nm[1]}{\nm[0]}\varmhat[0]+2\covmhhat \right),
\end{align*}
where
\begin{align*}
    \covmhhat=\frac{\nm}{\nm-1}\left\{\int_0^1\Gminvhat\Fminvhat\mathrm du-\ymbarhat[1]\ymbarhat[0]\right\},
\end{align*}
with $\Gminvhat$ and $\Fminvhat$ being the left-continuous inverse functions of $\Gmhat$ and $\Fmhat$, respectively.
The quantity $\sigmaAGL^2$ can be similarly expressed as $\sigmaAGL^2=\sum_{m=1}^M\pim \V(\hat G_{[m]},\hat F_{[m]},\hat H_{[m]}^U)$, where $\HmHhat=\min\{\Gmhat[y_1],\Fmhat[y_0]\}$.
The consistency of this estimator is demonstrated in the following theorem based on an additional condition.

\begin{condition}
    \label{cond4}
    (i) There exist constants $0< \delta < 1$ and $K>0$ independent of $n$, such that $\sup_n\{n^{-1}\sum_{i=1}^n|Y_i(z)|^{4+\delta}\}\leq K$, for $z=0,1$. (ii) As $n\rightarrow \infty$, $\sigma_S^2$ tends to a positive finite limit.
\end{condition}

Condition~\ref{cond4}(i) assumes a bounded $(4+\delta)$-th moment for the potential outcomes, which is crucial for applying a Glivenko--Cantelli type theorem in stratified randomized experiments under a general asymptotic regime.  A similar requirement is discussed by \citet{motoyama2023extended}.
Condition~\ref{cond4}(ii) ensures the regularity of the limit of the sharp bound \citep{imbens2018causal}.

\begin{theorem}[Consistency of sharp variance estimator]
\label{theoremAGLconvergence}
Under Conditions \ref{cond1}, \ref{cond3}, and \ref{cond4}, if $2 \leq \nm[1] \leq \nm-2$ for $m=1,\dots,M$,  then $\sigmaAGL^2 - \sigma_S^2 = o_P(1)$ and $\sqrt n(\hat \tau - \tau)/\sigmaAGL \stackrel{d}{\rightarrow} \mathcal{N}(0,\sigma^2/\sigma_S^2)$.
\end{theorem}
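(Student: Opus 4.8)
The plan is to establish the consistency $\sigmaAGL^2-\sigma_S^2=o_P(1)$ first and then obtain the limiting law by Slutsky's theorem. I would start from the decomposition
\begin{align*}
\sigmaAGL^2-\sigma_S^2 = \sum_{m=1}^M\pim\left\{\frac{\nm[0]}{\nm[1]}\left(\varmhat[1]-\varm[1]\right)+\frac{\nm[1]}{\nm[0]}\left(\varmhat[0]-\varm[0]\right)\right\}+2\sum_{m=1}^M\pim\left(\covmhhat-\covmh\right).
\end{align*}
Before anything else I would record that Condition~\ref{cond4}(i) implies Condition~\ref{cond2}: a uniformly bounded $(4+\delta)$th moment forces $n^{-1}\max_{m}\max_{i\in[m]}\{Y_i(z)-\ymbar\}^2\to0$. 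Consequently the weighted sample-variance consistency that underlies Proposition~\ref{CLT} applies verbatim, and the first sum above is $o_P(1)$. All the difficulty is in the second sum, the sharp covariance-bound term.

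For that term I would pass to the functional representation. Using Hoeffding's covariance identity,
\begin{align*}
\int_0^1\Gminvhat[u]\Fminvhat[u]\,\mathrm du-\ymbarhat[1]\ymbarhat[0]=\int_{\mathbb R^2}\left\{\HmHhat-\Gmhat[y_1]\Fmhat[y_0]\right\}\mathrm dy_1\,\mathrm dy_0,
\end{align*}
and the analogous population identity, so that $\sigmaAGL^2=\V(\hat G,\hat F,\hat H^U)$ and $\sigma_S^2=\V(G,F,H^U)$; the prefactor $\nm/(\nm-1)$ in $\covmhhat$ and $\covmh$ tends to $1$ uniformly and is harmless. The problem then reduces to showing that the stratum-weighted double integrals of $\min\{\Gmhat,\Fmhat\}-\Gmhat\Fmhat$ converge to their population counterparts, together with the simpler claim that $\sum_m\pim(\ymbarhat[1]\ymbarhat[0]-\ymbar[1]\ymbar[0])=o_P(1)$, which again follows from mean consistency under the moment bound.

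The central tool, and the main obstacle, is a Glivenko--Cantelli type theorem for the stratum-size-weighted empirical distribution functions, giving $\sup_y|\hat G(y)-G(y)|=o_P(1)$ and $\sup_y|\hat F(y)-F(y)|=o_P(1)$ with a rate valid uniformly whether the number of strata or the stratum sizes diverges. Because the observed units in each stratum form a without-replacement subsample, I would derive a concentration inequality for the weighted empirical distribution function from \citet{bobkov2004}'s inequality for functions of a random permutation and then aggregate across strata by a maximal inequality. Turning this sup-norm control into convergence of the double integrals requires handling the unbounded domain: I would split each integral into a growing compact rectangle, where the bounded integrand is controlled by the uniform convergence of the marginals, and the complementary tails, whose contribution is made uniformly negligible by Markov-type estimates on $\int_{|y|>T}\min\{G(y),1-G(y)\}\,\mathrm dy$ and its empirical analogue; here the $(4+\delta)$th moment of Condition~\ref{cond4}(i) supplies the decay needed across both regimes. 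Combining these gives $2\sum_m\pim(\covmhhat-\covmh)=o_P(1)$, hence $\sigmaAGL^2-\sigma_S^2=o_P(1)$, while Condition~\ref{cond4}(ii) keeps the limit of $\sigma_S^2$ positive so that $\sigmaAGL$ is eventually bounded away from zero.

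For the limiting distribution I would factor $\sqrt n(\hat\tau-\tau)/\sigmaAGL=\{\sqrt n(\hat\tau-\tau)/\sigma\}\cdot(\sigma/\sigmaAGL)$. Proposition~\ref{CLT} (whose Conditions~\ref{cond1}--\ref{cond3} all hold, since Condition~\ref{cond2} follows from Condition~\ref{cond4}(i)) gives $\sqrt n(\hat\tau-\tau)/\sigma\asyequi\mathcal N(0,1)$, and the consistency just proved together with Condition~\ref{cond4}(ii) gives $\sigma^2/\sigmaAGL^2\to\sigma^2/\sigma_S^2$ in probability, understood at the common limits. Slutsky's theorem then yields $\sqrt n(\hat\tau-\tau)/\sigmaAGL\asyequi\mathcal N(0,\sigma^2/\sigma_S^2)$. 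I expect the Glivenko--Cantelli step under the general two-regime asymptotics to be the crux, since both the permutation-based concentration bound and the uniform tail control must be made to work without assuming that either the number or the size of strata stays bounded.
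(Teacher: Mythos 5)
Your proposal is correct and follows essentially the same architecture as the paper's proof: the identical decomposition of $\sigmaAGL^2-\sigma_S^2$ into variance and covariance-bound terms, the same reduction of the first term to the consistency result underlying Proposition~\ref{CLT} (noting Condition~\ref{cond4}(i) implies Condition~\ref{cond2}), the same key tool (a Bobkov-type concentration inequality for the stratum-weighted empirical distribution functions, aggregated over strata), and the same Slutsky ending. The one place you genuinely diverge is the passage from sup-norm control of the empirical distribution functions to control of the Hoeffding double integrals: you propose a deterministic truncation at a growing level $T$ plus two-dimensional tail estimates based on Markov bounds for $\int_{|y|>T}\min\{G(y),1-G(y)\}\,\mathrm dy$ and its empirical analogue. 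The paper avoids tails entirely by exploiting finiteness of the population: all potential outcomes lie in $[-C_n,C_n]$ with $C_n\leq (Kn)^{1/(4+\delta)}$, so Lehmann's identity \citep{lehmann1966some} represents both $\covmh$ and $\covmhhat$ exactly as integrals over the compact square $[-C_n,C_n]^2$; the $(4+\delta)$th moment is then used not for tail decay but to ensure the exponential concentration bound (at threshold $\varepsilon/(4C_n^2)$) beats the polynomial growth of $C_n^4$, i.e., $n\exp\{-cn\varepsilon^2/C_n^4\}\to 0$ since $n/C_n^4\geq n^{\delta/(4+\delta)}/K^{4/(4+\delta)}$. Your route can be made rigorous (e.g., via $\min\{G,F\}-GF\leq\min\{1-G,1-F,G,F\}$ and factorized tail bounds), but it is the thinnest part of your sketch; the same caveat applies to the product-of-means term, which you dismiss as "mean consistency" but which in the paper requires a weighted-mean concentration inequality applied at threshold $\varepsilon/C_n$, again trading exponential concentration against the growth of $C_n$, rather than stratum-wise consistency (which fails for bounded-size strata).
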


Theorem \ref{theoremAGLconvergence} supports different asymptotic regimes for strata, including (i) a fixed number of strata with increasing sizes, and (ii) a growing number of strata with bounded sizes.
In the first regime, extending the theoretical results of \citet{aronow2014} is straightforward.
However, under the second regime, bounding the tail probability of the empirical distribution becomes challenging, especially when combining the implicit upper bound across strata as $M\rightarrow \infty$.
To address these challenges, we employ Bobkov's inequality \citep{bobkov2004}, which helps establish a concentration inequality for the empirical distribution functions.
This approach allows us to determine convergence rates and provide an explicit upper bound on the tail probability in a Glivenko--Cantelli type theorem. Building on this, we derive the convergence rate of the sharp variance estimator, culminating in Theorem \ref{theoremAGLconvergence}.

Combining Proposition~\ref{prop:agl} and Theorem~\ref{theoremAGLconvergence}, we can construct an asymptotically narrowest valid $1 - \alpha$ confidence interval for $\tau$: $(\hat \tau-\sigmaAGL z_{1-\alpha/2}/\sqrt n,\hat \tau+\sigmaAGL z_{1-\alpha/2}/\sqrt n)$. The asymptotic coverage rate of this confidence interval is no less than $1- \alpha$ and is equal to $1- \alpha$ if and only if $\yi[1]$ and $\yi[0]$ are co-monotonic in each stratum. 

Although the sharp variance estimator produces an asymptotically narrowest confidence interval, combining it with the normal approximation may lead to anti-conservative results in finite-sample simulations, especially when the potential outcomes are co-monotonic \citep{aronow2014,imbens2018causal}. To address this problem, we propose a causal bootstrap method that achieves second-order refinement and offers better finite-sample performance.

\section{Causal Bootstrap Based on Rank-preserving Imputation}
\label{sec:bootrank}

The standard bootstrap, based on resampling with replacement, is widely used in sampling-based inference frameworks, where data is assumed to be independently and identically distributed (i.i.d.) from a super-population \citep{abadie2020sampling}. However, in contrast, we adopt a design-based finite-population causal inference framework, where randomness arises solely from the treatment assignment process rather than from the population itself.
To mimic this design-based randomness, the causal bootstrap treats potential outcomes as fixed and resamples treatment assignments according to the original experimental design. This approach is conceptually similar to Fisher randomization tests, but with a crucial distinction: the causal bootstrap does not rely on imputing unobserved potential outcomes based on a hypothesis. Instead, it seeks an imputation method such that the resulting bootstrap distribution closely approximates the true distribution of the estimator, offering a more accurate reflection of the design-based uncertainty.

To achieve this goal, we aim to find an imputation method that ensures the variance of the average treatment effect (ATE) estimator is greater than or equal to the upper bound of the true variance.
Proposition \ref{prop:agl} demonstrates that when $Y_i(1)$ and $Y_i(0)$ are co-monotonic within each stratum, the variance of the weighted difference-in-means estimator reaches the sharp upper bound. Thus, to generate a valid causal bootstrap, we can apply rank-preserving imputation within each stratum, ensuring that the imputed potential outcomes are co-monotonic. Specifically, in a stratified randomized experiment,  the causal bootstrap process based on rank-preserving imputation can be outlined as follows:

\textbf{1. Rank-preserving imputation}: 
For each stratum $m$, and for $i\in [m]$, the imputation of potential outcomes is as follows:
\begin{align*}
&\yimp[0]= \begin{cases}\yo & \text { if } Z_i=0 \\
\hat F_{[m]}^{-1}\left\{\Gmhat[\yo]\right\} & \text { if } Z_i=1 \end{cases},
\quad
\yimp[1]= \begin{cases}\hat G_{[m]}^{-1}\left\{\Fmhat[\yo]\right\} & \text { if } Z_i=0 \\
\yo & \text { if } Z_i=1 
\end{cases}.
\end{align*} 
This step ensures the imputed potential outcomes are co-monotonic, maintaining the rank-preserving property. The bootstrap version of the average treatment effect (ATE) is then calculated as $\taub=n^{-1}\sum_{i=1}^n\{\yimp[1]-\yimp[0]\}$.

\textbf{2. Assignments resampling}: Use actual stratified randomization to generate treatment assignments $Z_i^{*}$, $*=1,\cdots,B$, where $B$ is the number of bootstrap replications. Obtain the bootstrap version of the observed outcome $\yob = Z_i^*\yimp[1]+(1-Z_i^*)\yimp[0]$, $i=1,\cdots,n$.

\textbf{3. Calculate the pivotal statistics}: Calculate the bootstrap version ATE estimator $\taubhat$, the sharp variance estimator $(\sigmaAGL^*)^2$, and the $t$-statistics $T^*=\sqrt n(\taubhat-\taub)/\sigmaAGL^*$, $*=1,\cdots,B$, with $\{Y_i\}_{i=1}^{n}$ replaced by $\{\yob\}_{i=1}^{n}$.

\textbf{4. Construct the confidence interval}: Construct a two-sided $1- \alpha$ confidence interval for $\tau$: $(\hat \tau-\sigmaAGL q_{1-\alpha/2}^*/\sqrt n,\hat \tau-\sigmaAGL q_{\alpha/2}^*/\sqrt n)$, where $q_{1-\alpha/2}^*$ and $q_{\alpha/2}^*$ are the upper and lower $\alpha/2$ quantiles of the empirical distribution of $\{T^*\}_{*=1}^{B}$.

\begin{remark}\label{remark:equivalence}
In Step 1, the imputation could alternatively be implemented by copying the observed outcomes and sorting them accordingly; see the Supplementary Material for further details.
\end{remark}

The following theorem establishes the bootstrap distribution of $\sqrt n(\taubhat-\taub)/\sigmaAGL^*$, given $\{(Y_i,Z_i)\}_{i=1}^{n}$.

\begin{theorem}[Bootstrap CLT]
    \label{bootstrapCLT}
    Suppose that Conditions \ref{cond1}, \ref{cond3} and \ref{cond4} hold. If $2 \leq \nm[1] \leq \nm-2$ for $m=1,\dots,M$, then $\sqrt n(\taubhat-\taub)/\sigmaAGL^* \mid \{(Y_i,Z_i)\}_{i=1}^{n} \asyequi \mathcal{N}(0,1)$ holds in probability.
\end{theorem}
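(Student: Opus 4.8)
The plan is to recognize that, conditional on the observed data $\{(Y_i,Z_i)\}_{i=1}^n$, the bootstrap procedure is \emph{itself} a stratified randomized experiment. The imputed potential outcomes $\{\yimp[1],\yimp[0]\}_{i=1}^n$ play the role of a fixed finite population, the stratum sizes $\nm[1],\nm[0]$ and the complete-randomization-within-strata mechanism are unchanged, $\taub$ is the estimand, $\taubhat$ is the weighted difference-in-means, and $(\sigmaAGL^*)^2$ is the sharp variance estimator in this bootstrap world. The strategy is therefore to apply Theorem~\ref{theoremAGLconvergence} to this (data-dependent) bootstrap population and read off the limiting law of $\sqrt{n}(\taubhat-\taub)/\sigmaAGL^*$.

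The crucial structural observation I would establish first is that the imputed population is co-monotonic within each stratum. By the rank-preserving construction, every imputed pair equals $(\Gminvhat[u_i],\Fminvhat[u_i])$ for a common $u_i\in[0,1]$ (with $u_i=\Fmhat[\yo]$ for control units and $u_i=\Gmhat[\yo]$ for treated units), so the bootstrap joint distribution coincides with the co-monotonic coupling of its own marginals. Applying the equality condition of Proposition~\ref{prop:agl} \emph{in the bootstrap world}, the true bootstrap variance equals its own sharp bound, $(\sigmab)^2=(\sigma_S^*)^2$ exactly. Hence the variance ratio appearing in the conclusion of Theorem~\ref{theoremAGLconvergence} is identically one, so that theorem would deliver $\sqrt{n}(\taubhat-\taub)/\sigmaAGL^*\asyequi\mathcal{N}(0,1)$ rather than a degenerate limit.

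The main obstacle is that Theorem~\ref{theoremAGLconvergence} is a statement about a \emph{deterministic} finite population obeying Conditions~\ref{cond1}, \ref{cond3}, and~\ref{cond4}, whereas here the population is random through the data. I would check that these conditions hold for the imputed population with probability tending to one. Condition~\ref{cond1} transfers verbatim, since the design is unchanged. For Conditions~\ref{cond3} and~\ref{cond4}, the key is to control the empirical moments and variances of the imputed outcomes: the imputed stratum-$m$ marginals agree, up to the discretization of the quantile maps, with $\Gmhat$ and $\Fmhat$, so the stratum-wise empirical $(4+\delta)$th moments, variances, and sharp covariances of the imputed population are, after weighting by $\pim$, controlled by the corresponding sample quantities of the observed outcomes. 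These converge in probability to their finite-population targets by the consistency tools already used for Theorem~\ref{theoremAGLconvergence} (notably the Glivenko--Cantelli type result obtained via Bobkov's inequality), whose limits are finite and positive under Conditions~\ref{cond3} and~\ref{cond4}. Thus the imputed population inherits Conditions~\ref{cond3} and~\ref{cond4} on an event of probability tending to one.

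Finally, to convert ``conditions hold with probability tending to one'' into the in-probability conditional convergence, I would use a subsequence argument. It suffices to show that every subsequence admits a further subsequence along which $\sup_t\bigl|P(\sqrt{n}(\taubhat-\taub)/\sigmaAGL^*\le t\mid\{(Y_i,Z_i)\}_{i=1}^n)-\Phi(t)\bigr|\to 0$ almost surely. Along such a subsequence, passing to a further one makes the data-dependent empirical moments and variances converge almost surely, so the imputed population satisfies Conditions~\ref{cond1}, \ref{cond3}, and~\ref{cond4} deterministically; the non-random Theorem~\ref{theoremAGLconvergence} then yields pointwise convergence of the conditional law (whose only randomness is over $Z_i^*$) to $\mathcal{N}(0,1)$, which Polya's theorem upgrades to uniform convergence in $t$ because the limit is continuous. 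Since every subsequence has such a further subsequence, the supremum tends to zero in probability, which is exactly the claimed conditional convergence.
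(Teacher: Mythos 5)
Your proposal is correct and follows essentially the same route as the paper's proof: both view the bootstrap, conditional on the data, as a stratified randomized experiment on the imputed population, use the co-monotonicity of $\yimp[1]$ and $\yimp[0]$ within each stratum to conclude that the true bootstrap variance equals its own sharp bound (so the limiting variance ratio is one), and then verify that Conditions~\ref{cond1}, \ref{cond3}, and~\ref{cond4} transfer to the imputed population in probability (the paper's Condition~\ref{cond4} check uses the observation that each observed value appears at most $\max_m\lceil \nm/\nm[z]\rceil$ times among the imputed outcomes, which matches your moment-control step). The only differences are cosmetic: the paper reduces to Proposition~\ref{CLT} plus the variance-consistency machinery from the proof of Theorem~\ref{theoremAGLconvergence}, whereas you invoke Theorem~\ref{theoremAGLconvergence} directly and make the ``conditions hold with probability tending to one'' step rigorous via an explicit subsequence argument that the paper leaves implicit.
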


Referring back to Theorem~\ref{theoremAGLconvergence}, we note that $\sqrt n(\hat \tau - \tau)/\sigmaAGL$ converges to $\mathcal{N}(0,\sigma^2/\sigma_S^2)$ with $\sigma^2/\sigma_S^2 \leq 1$, implying that the bootstrap distribution of $\sqrt n(\taubhat-\taub)/\sigmaAGL^*$ provides a conservative approximation for the asymptotic distribution of $\sqrt n(\hat \tau - \tau)/\sigmaAGL$.
To establish a second-order refinement of the causal bootstrap method over the normal approximation, we derive the Edgeworth expansion for the scaled studentized statistic $T=(\sigma_S/\sigma)\sqrt n(\hat \tau-\tau)/\sigmaAGL$, where the scaling ensures $T$ is asymptotically standard normal based on \Cref{theoremAGLconvergence}, facilitating comparison with its bootstrap counterpart, $T^*$, in second-order asymptotics.

In the bootstrap method, we construct an interval for $T$, and leveraging the inequality $\sigma^2\leq \sigma_S^2$, we obtain a conservative interval for $\sqrt n(\hat \tau-\tau)/\sigmaAGL$. Consequently, the asymptotic coverage rate of the bootstrap confidence interval $(\hat \tau-\sigmaAGL q_{1-\alpha/2}^*/\sqrt n,\hat \tau-\sigmaAGL q_{\alpha/2}^*/\sqrt n)$ is at least $1-\alpha$, achieving equality if and only if $\yi[1]$ and $\yi[0]$ are co-monotonic within each stratum.

\begin{condition}
\label{cond5}
(i) There exists a constant $C>0$ independent of $n$, such that $\sup_n|Y_i(z)|\leq C$, $z=0,1$. 
(ii) The functions $F(y)$ and $G(y)$ converge to two non-lattice distribution functions $F^{\infty}(y)$ and $G^{\infty}(y)$ as $n\rightarrow \infty$, respectively. A distribution function $F^{\infty}(y)$ is non-latticed if $|\int\exp(\i ty)\mathrm dF^{\infty}(y)|\neq 1$ for all $t\neq 0$, where $\i^2=-1$.
(iii) For all $m=1,\ldots,M$, as $n\rightarrow\infty$, $\log \nm/\log n$ has a finite limit.
\end{condition}

Condition~\ref{cond5}(i) assumes that potential outcomes are bounded. The non-lattice assumption specified in Condition~\ref{cond5}(ii) is commonly employed in second-order refinement \citep{imbens2018causal,wang2022bootstrap}.
An intuitive sufficient condition for a distribution to be non-latticed is that the distribution is continuous or does not concentrate its probability mass on discrete, regularly spaced points.
Examples of non-lattice distributions include the Normal, Exponential, Gamma, and Uniform distributions.
Condition~\ref{cond5}(iii) is a technical assumption typically met in many practical scenarios, for instance, when either $\nm$ is bounded with $M\rightarrow \infty$, or $M$ is bounded with $\nm \rightarrow \infty$.  

\begin{theorem}[Edgeworth expansion]
\label{theorem.Edgeworth}
Under Conditions \ref{cond1}, \ref{cond3}, \ref{cond4}(ii), and \ref{cond5}, if $2 \leq \nm[1] \leq \nm-2$ for $m=1,\dots,M$, then, as $n\rightarrow \infty$, $T$ has an Edgeworth expansion:
\begin{align*}
\mathrm{pr}(T\leq t)&=\Phi(t)+\frac {\sqrt n}2E\left(\frac{\hat \tau-\tau}{\sigma}\frac{\sigmaAGL^2-\sigma_S^2}{\sigma_S^2}\right)t^2\phi(t)-\frac{n^{3/2}E(\hat \tau-\tau)^3}{6\sigma^3}(t^2-1)\phi(t)+o\left(n^{-1/2}\right),
\end{align*}
where $\Phi(\cdot)$ and $\phi(\cdot)$ are the distribution function and the density function of a standard normal random variable, respectively.
\end{theorem}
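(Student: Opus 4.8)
The plan is to reduce the distributional statement to a bookkeeping of the first three cumulants of $T$, and then to justify a formal finite-population Edgeworth expansion for $T$ to order $o(n^{-1/2})$. Write $U=\sqrt n(\hat\tau-\tau)/\sigma$, so that $\var(U)=1$ and $U\asyequi\mathcal N(0,1)$ by Proposition~\ref{CLT}, and $R=(\sigmaAGL^2-\sigma_S^2)/\sigma_S^2=O_P(n^{-1/2})$ by Theorem~\ref{theoremAGLconvergence}. Using the exact identity $T=(\sigma_S/\sigmaAGL)U=(1+R)^{-1/2}U$ and the expansion $(1+R)^{-1/2}=1-\tfrac12R+O_P(R^2)$, I obtain the stochastic approximation $T=U-\tfrac12UR+O_P(n^{-1})$. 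Granting the validity of an order-$n^{-1/2}$ Edgeworth expansion for $T$ (established below), the distribution of $T$ is pinned down by its cumulants through the standard calculus: if $\kappa_1=E(T)$, $\kappa_2=\var(T)=1+\kappa_2'$, and $\kappa_3$ denote the first three cumulants, then expanding the Edgeworth series for the standardized statistic $(T-\kappa_1)/\kappa_2^{1/2}$ about $t$ gives $\mathrm{pr}(T\le t)=\Phi(t)-\kappa_1\phi(t)-\tfrac12\kappa_2' t\phi(t)-\tfrac{\kappa_3}{6}(t^2-1)\phi(t)+o(n^{-1/2})$.

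The second step is to evaluate these cumulants to order $n^{-1/2}$ from $T=U-\tfrac12UR+O_P(n^{-1})$. Using $E(U)=0$ gives $\kappa_1=-\tfrac12E(UR)+o(n^{-1/2})$, and a direct moment computation yields $\kappa_3=E(U^3)-\tfrac32E(U^3R)+\tfrac32E(UR)+o(n^{-1/2})$; since the leading $n^{-1/2}$ term of the mixed moment $E(U^3R)$ coincides with its Gaussian (Wick) value $3E(UR)$, this collapses to $\kappa_3=E(U^3)-3E(UR)+o(n^{-1/2})$. The delicate point is the variance: $\kappa_2'=\var(T)-1=-E(U^2R)+o(n^{-1/2})$, and I must show the cancellation $E(U^2R)=o(n^{-1/2})$ so that the $t\phi(t)$ term disappears. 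As in the classical studentized mean, the leading $n^{-1/2}$ contribution of $E(U^2R)$ cancels: this reduces to verifying that the bias $E(\sigmaAGL^2)-\sigma_S^2$ and the covariance $\mathrm{Cov}(U^2,\sigmaAGL^2)$ are both $o(n^{-1/2})$, which is a genuine finite-population moment estimate for the sharp variance estimator rather than an automatic fact. Substituting $\kappa_1$, $\kappa_2'=o(n^{-1/2})$, and $\kappa_3$ into the cumulant formula and recombining via $(t^2-1)\phi(t)=t^2\phi(t)-\phi(t)$ produces exactly $\tfrac12E(UR)\,t^2\phi(t)-\tfrac16E(U^3)(t^2-1)\phi(t)$, which matches the stated expansion after rescaling $E(UR)=\sqrt n\,E\{(\hat\tau-\tau)\sigma^{-1}(\sigmaAGL^2-\sigma_S^2)\sigma_S^{-2}\}$ and $E(U^3)=n^{3/2}E(\hat\tau-\tau)^3/\sigma^3$.

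The third and hardest step is to justify that $T$ genuinely admits such an Edgeworth expansion, which requires casting $T$ as an asymptotically smooth function of a fixed-dimensional vector of stratum-level sample moments and verifying a Cram\'er-type condition. The numerator $\hat\tau-\tau=\sum_{m=1}^M\pim(\taumhat-\taum)$ is a weighted sum, over strata that are independent under stratified randomization, of linear statistics of samples drawn without replacement, so the finite-population Edgeworth theory of \citet{babu1985Edgeworth} applies to it directly. The obstacle is the denominator, since $\sigmaAGL^2$ contains the non-smooth quantile-integral term $\int_0^1\Gminvhat\Fminvhat\,\mathrm du$. I would linearize this functional by a Bahadur-type representation, expanding $\int_0^1\Gminvhat\Fminvhat\,\mathrm du-\int_0^1\Gminv\Fminv\,\mathrm du$ as a linear functional of the empirical processes $\Gmhat-\Gm$ and $\Fmhat-\Fm$ plus a remainder that is $o_P(n^{-1/2})$ after weighting and summation; the rate control here reuses the concentration inequality for the stratum-size-weighted empirical distribution function obtained from Bobkov's inequality \citep{bobkov2004} and the Glivenko--Cantelli type theorem already developed for Theorem~\ref{theoremAGLconvergence}, now sharpened under the bounded-outcome Condition~\ref{cond5}(i). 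With this linearization, $T$ becomes an asymptotically smooth function of stratum-level means, and the multivariate finite-population Edgeworth expansion followed by the delta method yields the claimed univariate expansion; the non-lattice Condition~\ref{cond5}(ii) supplies the Cram\'er condition, which factorizes across the independent strata.

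The main obstacle is precisely this validity step carried out uniformly across the admissible stratum asymptotic regime. Unlike the unstratified analysis of \citet{imbens2018causal}, the characteristic function of the pivotal statistic is a product of $M$ stratum-level factors whose number may grow with $n$, so the stratum-wise Edgeworth remainders must be accumulated without exceeding $o(n^{-1/2})$; Condition~\ref{cond5}(iii) is what balances these contributions. Establishing the Bahadur-type linearization of $\int_0^1\Gminvhat\Fminvhat\,\mathrm du$ with a remainder that stays negligible after the weighted sum over strata, and confirming that the resulting linear term enters only through $E(UR)$ while leaving $\kappa_2'=o(n^{-1/2})$, is where the bulk of the technical work lies.
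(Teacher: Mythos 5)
Your cumulant bookkeeping (steps 1--2) coincides with the paper's own computation: the paper's formal moment-expansion lemma in the Supplement plus the Taylor expansion of $\sigma_S/\sigmaAGL=(1+R)^{-1/2}$ is exactly your reduction, and your values $\kappa_1=-\tfrac12E(UR)+o(n^{-1/2})$, $\kappa_3=E(U^3)-3E(UR)+o(n^{-1/2})$, recombined via $(t^2-1)\phi(t)=t^2\phi(t)-\phi(t)$, reproduce the paper's $\gamma_1,\gamma_3$ and the stated expansion, with your $U=\sqrt n(\hat\tau-\tau)/\sigma$ and $R=(\sigmaAGL^2-\sigma_S^2)/\sigma_S^2$. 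You are also right, and candid, that the cancellation $E(U^2R)=o(n^{-1/2})$ is a genuine moment estimate; the paper does no better on this point, asserting $\gamma_2=O(n^{-1})$ without further argument.

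The genuine gap is your third step, and it is structural rather than a matter of unfinished detail. By the Lehmann identity used in the paper's proof of Theorem~\ref{theoremAGLconvergence}, the quantile integral in $\sigmaAGL^2$ equals a constant plus $\int\!\!\int \HmHhat \,\mathrm dy_1\mathrm dy_0$ with $\HmHhat=\min\{\Gmhat[y_1],\Fmhat[y_0]\}$, and the map $(G,F)\mapsto\min\{G,F\}$ is only directionally differentiable. On the set $\{(y_1,y_0):\Gm[y_1]=\Fm[y_0]\}$ the fluctuation is $\min\{\Gmhat[y_1]-\Gm[y_1],\,\Fmhat[y_0]-\Fm[y_0]\}$, the minimum of two mean-zero $O_P(\nm^{-1/2})$ variables; by Jensen's inequality its expectation is strictly negative of that same order, so it is approximated by \emph{no} linear functional of the empirical processes. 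Because $\Gm$ and $\Fm$ are step functions taking values on the common grid $\{j/\nm\}$, this equality set is a union of rectangles whose measure is bounded away from zero whenever stratum sizes stay bounded (a regime the theorem explicitly allows, and which Condition~\ref{cond5}(iii) is designed to accommodate) or whenever outcomes have ties; moreover the nonlinear remainders have the same sign in every stratum, so weighting and summing over strata produces no cancellation. Hence the Bahadur-type linearization with $o_P(n^{-1/2})$ remainder, on which your smooth-function-of-stratum-means reduction and delta-method step rest, does not exist under the theorem's stated hypotheses; the smoothness you would need is essentially Condition~\ref{cond6}(i), which the paper invokes only for Theorem~\ref{theorem.refinements}, not here. (The paper's own simulations attribute the finite-sample underestimation of $\sigmaAGL^2$ to exactly this nonlinearity of the functional $V$, confirming it is a first-order phenomenon, not a technicality.)

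The paper's proof is organized precisely to dodge this point: it never linearizes $\sigmaAGL^2$. The mixed moment $E\{(\hat\tau-\tau)(\sigmaAGL^2-\sigma_S^2)\}$ is carried unevaluated inside the coefficients; studentization is removed only at the level of orders of magnitude, via $ET^l\asymp EZ^l$ for the normalized linear statistic $Z=\sqrt n(\hat\tau-\tau)/\sigma$; and the validity/tail control for the terms with $l\geq4$ is done entirely for $Z$, whose characteristic function factorizes exactly over the independent strata, the resulting coefficients being bounded by classifying strata into large, medium, and small groups (where Condition~\ref{cond5}(iii) enters) and invoking the stratum-level finite-population expansions of Bloznelis--G\"otze (2002) and Imbens--Menzel (2021). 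To repair your route you would either have to replace the linearization by this kind of moment-order transfer to $Z$, or add an explicit smoothness/no-ties assumption --- in which case you would be proving a weaker theorem than the one stated.
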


In comparison to the Edgeworth expansion derived by \citet{imbens2018causal}, the main challenge here lies in adapting to a diverse range of asymptotic regimes for the strata sizes. To address this, we classify the strata into three distinct groups based on their sizes: large strata, where the size is comparable to $n$; small strata, with bounded sizes; and medium strata, which fall between these two extremes. We handle each group separately, deriving the corresponding results. Further details are provided in the Supplementary Material.
By leveraging the Edgeworth expansion, we obtain the second-order refinement of the causal bootstrap, demonstrating its improvement over the normal approximation.

\begin{condition}
    \label{cond6}
    (i) The functionals $\W(G,F)$ and $\V(G,F,H)$ are three times Fr\'echet-differentiable with two bounded derivatives. (ii) The difference between $\{H_{[m]}(y_1,y_0)\}$ and $\{H_{[m]}^U(y_1,y_0)\}$ satisfies $\sum_{m=1}^M\pim \sup_{y_1,y_0}|H_{[m]}(y_1,y_0)-H_{[m]}^U(y_1,y_0)|=o(n^{-1/2})$.
\end{condition}

Condition~\ref{cond6}(i) ensures sufficient smoothness for two functionals; a similar assumption was also made by  \citet{imbens2018causal}. It ensures that the functionals behave in a predictable manner as the sample size $n$ grows, which is key for deriving accurate asymptotic results.
Condition~\ref{cond6}(ii) establishes precise control over the difference between the subgroup-specific true joint distributions $ H_{[m]}(y_1,y_0) $ and their co-monotonic counterparts $ H_{[m]}^U(y_1,y_0) $.
Under this condition, the difference of $\sigma_S^2$ and $\sigma^2$ is at most of order $o(n^{-1/2})$.
Without Condition~\ref{cond6}(ii), Theorems \ref{theoremAGLconvergence} and \ref{bootstrapCLT} demonstrate that both the normal approximation and the causal bootstrap serve as conservative approximations of the true distribution of $T$. Conservativeness implies that while the inference remains valid, the distribution approximation exhibits a first-order bias that dominates second-order differences. Thus, Condition~\ref{cond6}(ii) is necessary to eliminate this first-order bias and facilitate asymptotic second-order comparison as outlined in Theorem \ref{theorem.refinements}. 
More broadly, ruling out first-order bias through assumptions is essential for investigating second-order refinement, given the inherent conservativeness of design-based inference \citep{imbens2018causal}. We relax the slightly stronger assumption proposed by \citet{imbens2018causal} that $H_{[m]}(y_1, y_0) = H_{[m]}^U(y_1, y_0)$, which assumes fully co-monotonic potential outcomes within each stratum.
In contrast, Condition~\ref{cond6}(ii) allows for some discrepancy between the true and co-monotonic distributions, but limits how large this discrepancy can be.

\begin{theorem}[Second-order refinement]
\label{theorem.refinements}
Under Conditions \ref{cond1}, \ref{cond3}, \ref{cond4}(ii), \ref{cond5}, and \ref{cond6}, if $2 \leq \nm[1] \leq \nm-2$ for $m=1,\dots,M$, then, as $n\rightarrow \infty$, $\sup_{t\in \mathbb R}|\mathrm{pr}(T\leq t)-\Phi(t)|=O(n^{-1/2})$, $\sup_{t\in \mathbb R}|\mathrm{pr}(T\leq t)-\mathrm{pr}^*(T^*\leq t)|=o_P(n^{-1/2})$,
where $\mathrm{pr}^*$ denotes the conditional probability induced by the bootstrap assignments $Z_i^*$ given the original data $\{(Y_i,Z_i)\}_{i=1}^{n}$. 
\end{theorem}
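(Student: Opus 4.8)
The plan is to obtain both statements from the Edgeworth expansion of Theorem~\ref{theorem.Edgeworth}, applied first to $T$ and then, conditionally on the data, to the bootstrap analogue $T^*$, and to show that the two expansions agree through the $n^{-1/2}$ term. The first claim is immediate: in Theorem~\ref{theorem.Edgeworth} the two correction terms equal a bounded function of $t$ ($t^2\phi(t)$ or $(t^2-1)\phi(t)$) times a coefficient that is $O(n^{-1/2})$, because $\hat\tau-\tau=O_P(n^{-1/2})$, $\sigmaAGL^2-\sigma_S^2=O_P(n^{-1/2})$, and the third central moment of the stratified difference-in-means is $O(n^{-2})$. Hence the remainder-free part is $O(n^{-1/2})$ uniformly in $t$, which gives $\sup_t|\mathrm{pr}(T\le t)-\Phi(t)|=O(n^{-1/2})$.

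For the refinement I would first record that the bootstrap is itself a stratified randomized experiment whose fixed potential outcomes are the rank-preserving imputations $\yimp[1],\yimp[0]$; these are co-monotonic within each stratum by construction, so Proposition~\ref{prop:agl} forces the bootstrap truth-variance to equal its sharp bound, and the scaling factor that appears in front of $T$ collapses to one, so $T^*$ is already in scaled-studentized form. The first task is therefore to verify that the imputed population satisfies Conditions~\ref{cond1}, \ref{cond3}, \ref{cond4}(ii) and \ref{cond5} with probability tending to one---marginal moments converging by the Glivenko--Cantelli rates already established for Theorem~\ref{theoremAGLconvergence}, and the lattice character of the imputed outcomes being harmless because only the polynomial coefficients, not the lattice span, enter the comparison---so that Theorem~\ref{theorem.Edgeworth} applies conditionally and yields an expansion for $T^*$ with an $o_P(n^{-1/2})$ remainder. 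Subtracting the two expansions cancels $\Phi(t)$ and reduces the claim to matching the two polynomial coefficients up to $o_P(1)$, that is,
\begin{align*}
&\frac{n^{2}E(\hat\tau-\tau)^3}{\sigma^3}-\frac{n^{2}E^*(\taubhat-\taub)^3}{(\sigmab)^3}=o_P(1),\\
&n\,E\!\left(\frac{\hat\tau-\tau}{\sigma}\frac{\sigmaAGL^2-\sigma_S^2}{\sigma_S^2}\right)-n\,E^*\!\left(\frac{\taubhat-\taub}{\sigmab}\frac{(\sigmaAGL^*)^2-(\sigmab)^2}{(\sigmab)^2}\right)=o_P(1);
\end{align*}
boundedness of $t^2\phi(t)$ and $(t^2-1)\phi(t)$ then upgrades the coefficient match to the uniform $o_P(n^{-1/2})$ bound.

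I would prove each match by writing the $n^2$-scaled third cumulant and the $n$-scaled estimator--variance covariance as smooth functionals of the triple of distribution functions, evaluated at $(G,F,H)$ for $T$ and at $(\hat G_{[m]},\hat F_{[m]},\hat H_{[m]}^U)$ for $T^*$, and then expanding with the Fr\'echet derivatives supplied by Condition~\ref{cond6}(i). The marginal arguments agree because $\hat G_{[m]}\to G_{[m]}$ and $\hat F_{[m]}\to F_{[m]}$ at the Glivenko--Cantelli rate; the genuinely new feature is that the bootstrap uses the co-monotonic joint $\hat H_{[m]}^U$ whereas the target moments depend on the true joint $H_{[m]}$, and bridging this gap is where Condition~\ref{cond6}(ii), $\sup_{y_1,y_0}|\Hm-\HmH|=O(\nm^{-1/2})$, enters: integrating the bounded cubic kernels against $d(H_{[m]}-H_{[m]}^U)$ shows each stratum's joint-dependent contribution changes by only $O(\nm^{-5/2})$, and the weighted aggregate $\sum_m\pim^3(\cdot)$ is then controlled after multiplication by $n^2$.

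The main obstacle is carrying this aggregation through the heterogeneous-strata regime uniformly. The quantity $E(\hat\tau-\tau)^3=\sum_m\pim^3E(\taumhat-\taum)^3$ mixes strata whose sizes range from bounded to order $n$, and the within-stratum third moment is itself joint-dependent and, being driven by the design imbalance $\rhom-1/2$, does not in general vanish; the delicate point is to show that the accumulated discrepancy from the many small strata---where Condition~\ref{cond6}(ii) is least informative and the empirical marginals are noisiest---is still $o_P(1)$ after the $n^2$ scaling. I would handle this exactly as in Theorem~\ref{theorem.Edgeworth}, partitioning the strata into large (size comparable to $n$), medium, and small (bounded) groups, bounding each group's contribution to the cumulant difference separately, and using Condition~\ref{cond5}(iii) to align the stratum-size exponents---the small-stratum group being the term that demands the most care and on which the whole refinement ultimately rests.
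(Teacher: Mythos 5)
Your proposal is correct in its overall architecture and, for most of its length, follows the same skeleton as the paper's proof: the first claim is read off from Theorem~\ref{theorem.Edgeworth} exactly as you say; co-monotonicity of the imputed population (via Proposition~\ref{prop:agl}) collapses the scaling factor so that Theorem~\ref{theorem.Edgeworth} applies conditionally to $T^*$; and the refinement reduces to matching the two Edgeworth coefficients, with your scalings by $n$ and $n^2$ being the right ones. Where you genuinely diverge is the mechanism for the coefficient match. The paper does it in two separate steps: first, it invokes the bootstrap cumulant convergence established in the proof of Theorem A.1 of \citet{imbens2018causal}, applied within each stratum, which gives
$E^*[(\taumbhat-\taumb)\{(\sigmaAGLm^*)^2-(\sigma_{S,[m]}^*)^2\}]-E\{(\taumhat-\taum)(\sigmaAGLm^2-\sigma_{S,[m]}^2)\}=o_P(\nm^{-1})$
and the analogous $o_P(\nm^{-2})$ bound for third moments, and then aggregates through the identity
$E\{(\hat \tau-\tau)(\sigmaAGL^2-\sigma_S^2)\}=\sum_{m=1}^M\pim^3E\{(\taumhat-\taum)(\sigmaAGLm^2-\sigma_{S,[m]}^2)\}$,
where cross-stratum terms vanish because assignments are independent across strata and $E(\taumhat-\taum)=0$; the cubic weights then give $\sum_m\pim^3\,o_P(\nm^{-1})=o_P(n^{-1})$ with no partition of strata at all. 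Second, it bridges the gap between the true joint $H$ and the co-monotonic $H^U$ exactly as you do, via the Fr\'echet differentiability of Condition~\ref{cond6}(i) and the $O(n^{-1/2})$ bound of Condition~\ref{cond6}(ii). Your one-step route (empirical co-monotonic triple directly to the true population triple) essentially re-derives the Imbens--Menzel cumulant comparison from scratch, which is workable but heavier; more importantly, your diagnosis that the small strata are the crux is a misreading of where the difficulty lies. Because of the $\pim^3$ weighting and the bounded outcomes of Condition~\ref{cond5}(i), a crude uniform bound on the per-stratum coefficient differences already makes the small-stratum contribution $O_P(n^{-2})$; the three-group partition $\mathcal A,\mathcal B,\mathcal C$ is needed only inside the proof of Theorem~\ref{theorem.Edgeworth} itself (to control $\gamma_l$ for $l\geq 4$), and you get to reuse that result for free here. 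Two points you should make explicit if you pursue your route: the per-stratum $o_P$ bounds must hold uniformly in $m$ before being summed over possibly $M\rightarrow\infty$ strata, and the conditional applicability of Theorem~\ref{theorem.Edgeworth} to $T^*$ requires verifying Condition~\ref{cond5}(ii) for the imputed population (its marginals converge to the non-lattice $F^{\infty}$, $G^{\infty}$); this is a validity requirement for the bootstrap expansion itself, not merely a matter of ``only the polynomial coefficients entering the comparison.''
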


In contrast to the normal approximation, which has a convergence rate of $O(n^{-1/2})$, the causal bootstrap method approximates the true distribution function with a faster rate of $o_P (n^{-1/2})$.
This result underscores the advantages of employing the causal bootstrap, especially in scenarios with small or moderate sample sizes. These benefits are further illustrated through numerical results in Sections \ref{sec:sim} and \ref{sec:rd}.
The error rate of $o(n^{-1/2})$ arises from the Edgeworth expansion for the studentized statistic $T$ with an \textit{estimated variance} and is also reported in the literature on Edgeworth expansions for other studentized statistics in finite-population inference \citep{bloznelis2003edgeworth,wang2022bootstrap}. In contrast, \citet{imbens2018causal} achieved an explicit rate of $O(n^{-1})$ by employing the Edgeworth expansion for statistics with a \textit{known variance} \citep{bloznelis2002edgeworth}.

\section{Causal Bootstrap Based on Constant-treatment-effect Imputation}
\label{sec:bootsharp}

While paired experiments are special cases of stratified randomized experiments with $\nm[1] = \nm[0] = 1$, they present unique challenges. These challenges arise from their small strata size, causing methods like the sharp variance estimator and rank-preserving imputation to fail.
Table~\ref{toyexample} illustrates this with a toy example where $n = 4$ units are grouped into $M = 2$ pairs. 
In this example, rank-preserving imputation results in two identical units in each pair, which results in the difference-in-means estimator being fixed at 4, regardless of randomization. Consequently, the bootstrap distribution degenerates to a single point, failing to approximate the true distribution.
This phenomenon isn't limited to just this toy example; it occurs across all paired experiments. Therefore, rank-preserving imputation method breaks down, making it necessary to explore alternative imputation methods to establish a valid and feasible causal bootstrap procedure tailored for paired experiments.

\begin{table*}[!htbp]%
\caption{A toy example demonstrates that rank-preserving imputation fails for paired experiments.\label{toyexample}\hspace{0.5\textwidth}}
\begin{tabular*}{0.45\textwidth}{@{\extracolsep\fill}ccccc@{}}
\toprule
\multicolumn{5}{c}{Observed Outcomes} \\ 
\midrule
$m$ & $\yi[1]$ & $\yi[0]$ & $Z_i$ & $Y_i$\\ 
\midrule
1 & 8 & {\color{red} ?} & 1 & 8\\ 
1 & {\color{red} ?} & 4 & 0 & 4\\ 
2 & 6 & {\color{red} ?} & 1 & 6\\ 
2 & {\color{red} ?} & 2 & 0 & 2\\ 
\bottomrule
\end{tabular*}
\hfill
\begin{tabular*}{0.45\textwidth}{@{\extracolsep\fill}ccccc@{}}
  \toprule
\multicolumn{5}{c}{Rank-preserving Imputed Outcomes} \\ \midrule
$m$ & $\yi[1]$ & $\yi[0]$ & $Z_i$ & $Y_i$\\ \midrule
1 & 8 & {\color{red} (4)} & 1 & 8\\ 
1 & {\color{red} (8)} & 4 & 0 & 4\\ 
2 & 6 & {\color{red} (2)} & 1 & 6\\ 
2 & {\color{red} (6)} & 2 & 0 & 2\\ 
\bottomrule
\end{tabular*}
\end{table*}

Recalling that if we identify an imputation under which the variance of the ATE estimator is greater than or equal to the upper bound of the true variance, intuitively, the causal bootstrap based on this imputation should be valid, as shown in Section~\ref{sec:bootrank}.
The Neyman-type variance estimator is conservative, as its asymptotic limit serves as an upper bound for the true variance. 
This upper bound is achieved when constant treatment effect assumption holds, i.e., $Y_i(1)=Y_i(0)+\Delta$ for all $i$, with a constant $\Delta$. 
Motivated by this, we propose imputing unobserved potential outcomes based on the constant treatment effect assumption.
Moreover, we will prove the validity of the corresponding causal bootstrap without assuming a constant treatment effect.

We now briefly review some known results for design-based causal inference in paired experiments before introducing the causal bootstrap procedure.
In paired experiments, there are $M = n/2$ pairs, with each pair consisting of two units ($n_{[m]} = 2$). Let the potential outcomes for the two units in pair $m$ be denoted as $\yma$ and $\ymb$, respectively. The average treatment effect (ATE) is given by $\tau = \sum_{m=1}^M \pim \taum = M^{-1} \sum_{m=1}^M \taum$, where
$\taum = \{\yma[1] + \ymb[1]\}/2 - \{\yma[0] + \ymb[0]\}/2$.
For unit $i$ in the $m$-th pair, $Z_{[m]i}$ represents the treatment assignment, and $Y_{[m]i}$ denotes the observed outcome, defined as $Y_{[m]i} = Z_{[m]i}Y_{[m]i}(1) + (1 - Z_{[m]i})Y_{[m]i}(0)$ for $i=1,2$.
The weighted difference-in-means estimator $\hat{\tau}=M^{-1}\sum_{m=1}^M\taumhat$, where $\taumhat=\zm\{\ymao-\ymbo\}+(1-\zm)\{\ymbo-\ymao\}$.
The variance of $\sqrt{n}(\hat{\tau} - \tau)$ is given by $\sigma^2=n\mathrm{var}(\hat \tau)=n^{-1}\sum_{m=1}^M\left\{\yma[1]+\yma[0]-\ymb[1]-\ymb[0] \right\}^2$.
A commonly used Neyman-type variance estimator is $\hat \sigma_{\p}^2=2\sum_{m=1}^M(\taumhat -\hat\tau)^2/(M-1)$, which is generally conservative and is consistent if the treatment effects $\taum$ are constant across pairs \citep{imai2008pair}. Its expectation is $\sigma_{\p}^2=E(\hat \sigma_{\p}^2)=\sigma^2+2\sum_{m=1}^M(\taum-\tau)^2/(M-1)$, which is generally not sharp when the marginal distributions $\Gm$ and $\Fm$ are estimable, as in \Cref{prop:agl}. However, $\Gm$ and $\Fm$ cannot be estimated from a single observation in each pair, motivating us to consider an alternative estimable class to examine the sharp variance upper bound in paired experiments. \Cref{proposition.pair} establishes the sharpness of $\sigma_{\p}^2$ within the class of unbiasedly estimable variance upper bounds. This notion underscores that the sharpness of a variance upper bound should be defined within an estimable class specific to the experimental setting \citep{harshaw2021optimized}.
\begin{proposition}
    \label{proposition.pair}
    The bound $\sigma^2\leq \sigma_{\p}^2$ is sharp within the class of unbiasedly estimable variance upper bounds of $\sigma^2$ in paired randomized experiments. Equality holds if and only if the treatment effects $\taum$ are constant across pairs.
\end{proposition}
Leveraging asymptotic normality, we construct a conservative Wald-type confidence interval $(\hat \tau-z_{1-\alpha/2}\hat\sigma_{\p}/\sqrt n,\hat \tau+z_{1-\alpha/2}\hat\sigma_{\p}/\sqrt n)$. 
To improve the finite-sample performance, we propose a causal bootstrap based on the
constant-treatment-effect imputation as follows:

\textbf{1. Constant-treatment-effect imputation}: 
For each pair $m=1,\ldots,M$ and unit $i=1,2$, impute the unobserved potential outcomes as follows:
\begin{align*}
&\ymiimp[0]= \begin{cases}Y_{[m]i} & \text { if } Z_{[m]i}=0 \\
Y_{[m]i} -\Delta & \text { if } Z_{[m]i}=1 \end{cases},
\quad
\ymiimp[1]= \begin{cases}Y_{[m]i} +\Delta & \text { if } Z_{[m]i}=0 \\
Y_{[m]i} & \text { if } Z_{[m]i}=1 
\end{cases},
\end{align*} 
where $\Delta=\hat \tau$. The bootstrap version of the ATE is $\taub=n^{-1}\sum_{m=1}^M\sum_{i=1}^2\{\ymiimp[1]-\ymiimp[0]\}$.

\textbf{2. Assignments resampling}: Using the actual paired randomization procedure, generate treatment assignments $Z_{[m]i}^*$, for $*=1,\cdots,B$. Obtain the bootstrap version of the observed outcome $Y_{[m]i}^*=Z_{[m]i}^*\ymiimp[1] + (1-Z_{[m]i}^*)\ymiimp[0]$.

\textbf{3. Calculate the pivotal statistics}: 
    Calculate the bootstrap version ATE estimator $\taubhat$, the variance estimator $(\hat \sigma_{\p}^*)^2$, an thed $t$-statistics $T^*_{\p}=\sqrt n(\taubhat-\taub)/\hat \sigma_{\p}^*$, $*=1,\cdots,B$, with $\{\ymao,\ymbo\}_{m=1}^{M}$ replaced by $\{\ymao^*,\ymbo^*\}_{m=1}^{M}$.

\textbf{4. Construct the confidence interval}: Construct a two-sided $1- \alpha$ confidence interval for $\tau$: $(\hat \tau-\hat \sigma_{\p}q_{1-\alpha/2}^*/\sqrt n,\hat \tau-\hat \sigma_{\p}q_{\alpha/2}^*/\sqrt n)$, where $q_{1-\alpha/2}^*$ and $q_{\alpha/2}^*$ are the upper and lower $\alpha/2$ quantiles of the empirical distribution of $\{T^*_{\p}\}_{*=1}^{B}$.

The following theorem confirms that the causal bootstrap based on constant-treatment-effect imputation could serve as an approximation for the true distribution of $\sqrt n(\hat \tau - \tau)/\hat \sigma_{\p}$.

\begin{condition}
    \label{cond7}
    (i) There exists $L>0$ independent of $n$, such that $\sup_n[n^{-1}\sum_{i=1}^n\{\yi[z]\}^4]\leq L$ for $z=0,1$.
    (ii) As $n\rightarrow \infty$, $\sigma_{\p}^2$ tends to a positive finite limit.
\end{condition}

Condition \ref{cond7}(i) assume bounded fourth moments for the potential outcomes, a condition also used by \citet{wu2021} for stratified randomized experiments.
Condition~\ref{cond7}(ii) is analogous to Condition~\ref{cond4}(ii), ensuring the regularity of the bootstrap variance.

\begin{theorem}[Bootstrap CLT]
    \label{theorem.pair.bootstrapCLT}
    Under Conditions~\ref{cond1}, \ref{cond3} and \ref{cond7}, we have $\sqrt n(\taubhat-\taub)/\hat \sigma_{\p}^*\mid \{(\ymao,\ymbo,Z_{[m]1},Z_{[m]2})\}_{m=1}^M\stackrel d{\rightarrow} \mathcal N(0,1)$ holds in probability.
\end{theorem}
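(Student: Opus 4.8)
The organizing observation is that, conditional on the data $\mathcal D=\{(\ymao,\ymbo,Z_{[m]1},Z_{[m]2})\}_{m=1}^M$, the bootstrap procedure is \emph{itself} a paired randomized experiment, with fixed imputed ``potential outcomes'' $\{\ymiimp[1],\ymiimp[0]\}$ and assignments $Z_{[m]i}^*$ drawn by paired randomization. Since the constant-treatment-effect imputation forces $\ymiimp[1]-\ymiimp[0]=\Delta=\hat\tau$ for every unit, every pair-level effect in the bootstrap world equals $\hat\tau$; the imputed experiment thus has an exactly \emph{constant} pair-level treatment effect. First I would record the two algebraic consequences: (i) the bootstrap estimand satisfies $\taub=\hat\tau$, and (ii) the centered pair-level estimator simplifies to $\taumhat^*-\taub=\eta_m(\taumhat-\hat\tau)$, where the $\eta_m\in\{-1,+1\}$ are independent (across $m$) symmetric signs generated by the bootstrap assignments (the deterministic sign $2Z_{[m]1}-1$ from the imputation being absorbed into $\eta_m$). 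Consequently $\sqrt n(\taubhat-\taub)=\sqrt{2/M}\sum_{m=1}^M\eta_m(\taumhat-\hat\tau)$, a normalized sum of independent, conditionally mean-zero terms whose conditional variance is $(\sigma^*)^2:=(2/M)\sum_{m=1}^M(\taumhat-\hat\tau)^2$.

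Next I would establish the conditional CLT $\sqrt n(\taubhat-\taub)/\sigma^*\mid\mathcal D\asyequi\mathcal N(0,1)$ through the Lindeberg--Feller theorem applied pathwise. The only nontrivial ingredient is the Lindeberg ratio $\max_m(\taumhat-\hat\tau)^2/\sum_m(\taumhat-\hat\tau)^2$, which I would show is $o_P(1)$ over the original randomization. For the numerator, the bounded fourth-moment Condition~\ref{cond7}(i) yields the deterministic bound $\max_i|Y_i|=O(n^{1/4})$, hence $\max_m(\taumhat-\hat\tau)^2=O_P(n^{1/2})$ (using $\hat\tau=O_P(1)$, which follows from Conditions~\ref{cond1} and~\ref{cond3}). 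For the denominator, note $\sum_m(\taumhat-\hat\tau)^2=\tfrac{M-1}2\hat\sigma_{\p}^2$; since $E(\hat\sigma_{\p}^2)=\sigma_{\p}^2$ tends to a positive limit (Condition~\ref{cond7}(ii)) and concentrates under the fourth-moment bound, the denominator is of exact order $n$ with probability tending to one. The ratio is therefore $O_P(n^{-1/2})$, and a standard characteristic-function plus subsequence argument upgrades the pathwise CLT on this high-probability event to the required conditional convergence ``in probability.''

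Then I would prove consistency of the bootstrap variance estimator, $(\hat\sigma_{\p}^*)^2/(\sigma^*)^2\to1$ in bootstrap probability, in probability over $\mathcal D$. Here the constant-effect structure pays off: expanding gives $(\hat\sigma_{\p}^*)^2=\tfrac{2}{M-1}\{\sum_m(\taumhat-\hat\tau)^2-M(\taubhat-\taub)^2\}$, whose only bootstrap-random part is $M(\taubhat-\taub)^2=O_{P^*}(1)$, negligible against $\sum_m(\taumhat-\hat\tau)^2=\Theta(M)$. This also shows $(\hat\sigma_{\p}^*)^2$ is conditionally unbiased for $(\sigma^*)^2$, so $(\hat\sigma_{\p}^*)^2=(\sigma^*)^2\{1+o_{P^*}(1)\}$. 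Finally, Slutsky's theorem combines the conditional CLT with this ratio consistency to give $T^*_{\p}=\sqrt n(\taubhat-\taub)/\hat\sigma_{\p}^*\mid\mathcal D\asyequi\mathcal N(0,1)$ in probability.

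I expect the main obstacle to be the second step: verifying that the Lindeberg condition and the lower bound $\sum_m(\taumhat-\hat\tau)^2=\Theta(n)$ hold with probability tending to one over the original design using only the fourth-moment Condition~\ref{cond7}(i), and then rendering the ``conditional convergence in probability'' statement rigorous, complicated by the fact that the imputed potential outcomes are themselves random through $\hat\tau$ and the observed outcomes. The concentration of $\hat\sigma_{\p}^2$ around $\sigma_{\p}^2$ is the technical heart, paralleling the consistency argument behind Proposition~\ref{CLT} but now needed sharply enough to control a maximum against the sum.
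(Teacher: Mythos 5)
Your proposal is correct, and it reaches the result by a genuinely different route from the paper. The paper's own proof is a condition-verification argument: conditional on the data, the bootstrap world is a paired experiment with fixed imputed potential outcomes, and the proof checks that this imputed population satisfies Conditions~\ref{cond1}--\ref{cond3} in probability --- Condition~\ref{cond1} trivially (since $\rhom=1/2$), Condition~\ref{cond2} by bounding $\{Y_{[m]i}^{\mathrm{imp}}(z)-\bar Y_{[m]}^{\mathrm{imp}}(z)\}^2=\frac 14\{Y_{[m]1}-Y_{[m]2}+(1-2Z_{[m]1})\Delta\}^2$ via Condition~\ref{cond7}(i), and Condition~\ref{cond3} by computing $S^2_{[m]Y^{\mathrm{imp}}(z)}=\frac 12(\taumhat-\Delta)^2$ and showing that $M^{-1}\sum_{m=1}^M(\taumhat-\Delta)^2$ converges in $L^2$ to $\frac{M-1}{2M}\sigma_{\p}^2+E(\hat\tau-\Delta)^2$, which has a positive limit by Condition~\ref{cond7}(ii) --- and then inherits the conditional CLT from Proposition~\ref{CLT}; the consistency of $\hat\sigma_{\p}^*$ is left implicit, resting on the fact quoted in Section~\ref{sec:bootsharp} that the paired variance estimator is consistent when pair-level effects are constant, as holds by construction in the bootstrap world. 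You instead exploit the paired structure directly: the exact representation $\taumbhat-\taub=\eta_m(\taumhat-\hat\tau)$ with i.i.d.\ Rademacher signs $\eta_m=(2Z_{[m]1}^*-1)(2Z_{[m]1}-1)$ (your algebra here is correct), a Lindeberg--Feller argument for the resulting independent sum, and exact algebra for the studentization step: $(\hat\sigma_{\p}^*)^2$ is conditionally unbiased for $(\sigma^*)^2$ and its only bootstrap-random part is $M(\taubhat-\taub)^2=O_{P^*}(1)$, negligible against $\sum_{m=1}^M(\taumhat-\hat\tau)^2\asymp n$. The facts that must be verified over the original design coincide in substance with the paper's checks --- your negligible-maximum bound plays the role of Condition~\ref{cond2}, and your lower bound on $\sum_{m=1}^M(\taumhat-\hat\tau)^2=\frac{M-1}{2}\hat\sigma_{\p}^2$ is exactly the paper's Condition~\ref{cond3} computation (note $\Delta=\hat\tau$), using the same fourth-moment and $L^2$ arguments from Condition~\ref{cond7}. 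What your route buys is self-containedness (no appeal to the general stratified CLT of Proposition~\ref{CLT}) and an explicit, airtight treatment of the variance-estimator consistency that the paper's written proof glosses over; what the paper's route buys is reusability, since the same verify-conditions-and-cite template also drives the proof of Theorem~\ref{bootstrapCLT} for the rank-preserving bootstrap.
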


Theorem~\ref{theorem.pair.bootstrapCLT} establishes that under the specified conditions, the distribution of the bootstrapped statistic converges to a standard normal distribution in probability. This implies that the causal bootstrap procedure is valid for constructing confidence intervals in paired experiments. Importantly, this result holds even if the constant treatment effect assumption used in the imputation procedure does not hold in reality, making the approach robust.

The central idea behind the causal bootstrap's validity lies in the use of pivotal or studentized statistics $T^*_{\p}=\sqrt n(\taubhat-\taub)/\hat \sigma_{\p}^*$, which is crucial in design-based finite-population causal inference \citep{cohen2022gaussian}.
\citet{ding2017} showed that Fisher randomization tests can be anti-conservative for testing the weak null hypothesis $H_0:\tau=0$ when using sharp null hypotheses $\yi[1]=\yi[0]$. 
However, as \citet{wu2021} demonstrated, the use of studentized statistics corrects this issue, ensuring asymptotically valid inferences for testing the weak null.

We examine the second-order asymptotic properties of both the normal approximation and the causal bootstrap for paired experiments in the Supplementary Material. Similar to Section \ref{sec:bootrank}, the causal bootstrap based on constant-treatment-effect imputation achieves a convergence rate of $o_P(n^{-1/2})$. However, unlike Section \ref{sec:bootrank}, the normal approximation also achieves a convergence rate of $o(n^{-1/2})$ rather than $O(n^{-1/2})$ in Theorem \ref{theorem.refinements}. This improvement arises from the favorable properties of the assignment variable $\zm$ in paired experiments, which follows an i.i.d. Bernoulli distribution and enhances the normal approximation's accuracy. Consequently, the second-order refinement of the causal bootstrap vanishes. Nevertheless, numerical results for paired experiments in Sections \ref{sec:sim} and \ref{sec:rd} demonstrate that the causal bootstrap outperforms the normal approximation when the potential outcomes' distribution is heavy-tailed or deviates significantly from normality. 
Theoretical comparison of causal bootstrap and normal approximation in those cases is left for future work.

\section{Simulation}
\label{sec:sim}

\subsection{Stratified Randomized Experiments}

In this section, we assess the finite-sample performance of the proposed methods in stratified randomized experiments.
We begin by examining the sharp variance estimator, followed by a comparison of three inference approaches: the normal approximation with the Neyman-type variance estimator (Neyman+Normal), the normal approximation with the sharp variance estimator (Sharp+Normal), and the causal bootstrap based on rank-preserving imputation with the sharp variance estimator (Sharp+Bootstrap).

Our data-generating process (DGP) involves three key components: stratum configurations, the proportion of treated units, and the relationship between $\yi[0]$ and $\yi[1]$.
For the stratum configurations, we consider $M=10$ or $M=20$ strata, with stratum sizes $\nm=10$ or $\nm=40$. For the proportion of treated units (i.e., propensity scores), we consider scenarios involving equal or unequal propensity scores across strata. In scenarios with equal propensity scores, we set $\rhom=0.5$ for all strata. In scenarios with unequal propensity scores, we set $\rhom=0.4$ for the first half of the strata and $\rhom=0.6$ for the second half. For the potential outcome distributions, we consider four cases:

\textbf{Case 1 (Additive)}: $\yi[1]$ is generated from $\text{Gamma}(1,1)$ independently and $\yi[0]=\yi[1]-\tau$ for $i=1,\cdots,n$. 
In this case, the treatment effects are additive, and $\yi[1]$ and $\yi[0]$ are co-monotonic, ensuring $\sigmaneyman^2$ and $\sigmaAGL^2$ are consistent. We present results with $\tau=0$ in the main text and $\tau=1$ in the Supplementary Material.
    
\textbf{Case 2 (Co-monotonic)}: Both $\yi[1]$ and $\yi[0]$ are generated from $\text{Gamma}(1,1)$ independently, followed by sorting them respectively to ensure $\yi[1]$ and $\yi[0]$ are co-monotonic. 
In this case, $\sigmaneyman^2$ is conservative, whereas $\sigmaAGL^2$ is consistent.

\textbf{Case 3 (Dependent)}: $\yi[1]$ is generated from $\text{Gamma}(1,1)$ independently and $\yi[0]=\yi[1]+\varepsilon_i$, where $\varepsilon_i\sim \mathcal N(0,0.5^2)$. In this case, both $\sigmaneyman^2$ and $\sigmaAGL^2$ are conservative.
    
\textbf{Case 4 (Independent)}: Both $\yi[1]$ and $\yi[0]$ are generated from $\text{Gamma}(1,1)$ independently. In this case, both $\sigmaneyman^2$ and $\sigmaAGL^2$ are conservative.

Since we consider the design-based inference, potential outcomes are generated once and kept fixed. The randomness arises solely from the treatment assignments, which are generated for 1000 replications.

\begin{table*}[!htbp]%
\caption{Square root of the ratio between the sharp variance estimator and the true variance.\label{table.sharp}\hspace{0.8\textwidth}}
\begin{tabular*}{\textwidth}{@{\extracolsep\fill}ccccccc@{}}
\toprule
\multicolumn{2}{c}{} & Case & Case 1 & Case 2 & Case 3 & Case 4\\
\cmidrule{3-7}
$\rhom$ & $M$ & $\nm$ & $\sigmaAGL/\sigma$ & $\sigmaAGL/\sigma$ & $\sigmaAGL/\sigma$ & $\sigmaAGL/\sigma$ \\
\midrule
\multirow{4}*{Equal} & 10 & 10 & 0.89 & 0.92 & 0.91 & 1.17 \\ 
& 10 & 40 & 0.96 & 0.96 & 0.98 & 1.37 \\ 
& 20 & 10 & 0.89 & 0.92 & 0.90 & 1.17 \\ 
& 20 & 40 & 0.96 & 0.96 & 0.98 & 1.36 \\ 
\midrule
\multirow{4}*{Unequal} & 10 & 10 & 0.88 & 0.91 & 0.89 & 1.14 \\ 
& 10 & 40 & 0.96 & 0.96 & 0.97 & 1.35 \\ 
& 20 & 10 & 0.88 & 0.92 & 0.89 & 1.14 \\ 
& 20 & 40 & 0.95 & 0.96 & 0.98 & 1.33 \\ 
\bottomrule
\end{tabular*}
\begin{tablenotes}
\item Case 1: Additive; Case 2: Co-monotonic; Case 3: Dependent; Case 4: Independent.
\end{tablenotes}
\end{table*}

Table~\ref{table.sharp} presents the square root of the ratio between the sharp variance estimator $\sigmaAGL$ and the true variance $\sigma$.
In cases 1 and 2, where potential outcomes are co-monotonic, the asymptotic theory suggests that the sharp variance estimator $\sigmaAGL^2$ should be a consistent estimator for $\sigma_S^2$. However, the simulation results show that $\sigmaAGL^2$ tends to underestimate $\sigma^2$, which possibly arises from the non-linearity of the functional $V$ \citep{imbens2018causal}.
In Cases 3 and 4, the asymptotic theory suggests that the sharp variance estimator $\sigmaAGL^2$ should be conservative. While the estimator is indeed conservative in Case 4, it tends to underestimate $\sigma^2$ in Case 3, particularly in smaller sample sizes. This discrepancy highlights that despite the theoretical expectation of conservativeness, practical simulations reveal scenarios where the estimator may fall short.
This underestimation of variance can lead to under-coverage of confidence intervals derived from normal approximations. Fortunately, this issue can be mitigated by combining the sharp variance estimator with the causal bootstrap, enhancing the reliability of inference.

\begin{table*}[!htbp]%
\caption{Simulation results for equal propensity scores.\label{tab:sim_eq}\hspace{0.8\textwidth}}
\begin{tabular*}{\textwidth}{@{\extracolsep\fill}ccccccccccc@{}}
\toprule
\multicolumn{2}{c}{} & Case & \multicolumn{2}{c}{Case 1} & \multicolumn{2}{c}{Case 2} & \multicolumn{2}{c}{Case 3} & \multicolumn{2}{c}{Case 4}\\
\cmidrule{3-11}
$M$ & $\nm$ & Method & Cover & Length & Cover & Length & Cover & Length & Cover & Length\\
\midrule
\multirow{3}*{10} & \multirow{3}*{10} & N+N & 0.955 & 0.781 & 0.961 & 0.695 & 0.954 & 0.976 & 0.993 & 0.691 \\ 
& & S+N & 0.925 & 0.694 & 0.925 & 0.597 & 0.917 & 0.867 & 0.978 & 0.615 \\ 
& & S+B & 0.950 & 0.766 & 0.943 & 0.655 & 0.946 & 0.970 & 0.991 & 0.680 \\ 
\midrule 
\multirow{3}*{10} & \multirow{3}*{40} & N+N & 0.950 & 0.368 & 0.956 & 0.386 & 0.954 & 0.454 & 0.997 & 0.396 \\ 
& & S+N & 0.945 & 0.353 & 0.945 & 0.366 & 0.943 & 0.433 & 0.994 & 0.381 \\ 
& & S+B & 0.947 & 0.365 & 0.946 & 0.378 & 0.949 & 0.448 & 0.996 & 0.395 \\ 
\midrule  
\multirow{3}*{20} & \multirow{3}*{10} & N+N & 0.954 & 0.490 & 0.969 & 0.523 & 0.948 & 0.652 & 0.992 & 0.524 \\ 
& & S+N & 0.927 & 0.436 & 0.918 & 0.444 & 0.925 & 0.573 & 0.979 & 0.465 \\ 
& & S+B & 0.948 & 0.475 & 0.943 & 0.478 & 0.940 & 0.624 & 0.988 & 0.506 \\ 
\midrule
\multirow{3}*{20} & \multirow{3}*{40} & N+N & 0.957 & 0.281 & 0.964 & 0.278 & 0.957 & 0.305 & 0.995 & 0.283 \\ 
& & S+N & 0.950 & 0.269 & 0.953 & 0.262 & 0.948 & 0.291 & 0.995 & 0.271 \\ 
& & S+B & 0.956 & 0.277 & 0.957 & 0.271 & 0.950 & 0.300 & 0.995 & 0.280 \\ 
\bottomrule
\end{tabular*}
\begin{tablenotes}
\item Case 1: Additive; Case 2: Co-monotonic; Case 3: Dependent; Case 4: Independent.
\item N+N is short for Neyman+Normal; S+N is short for Sharp+Normal; S+B is short for Sharp+Bootstrap.
\item Cover: Empirical coverage probability of $95\%$ confidence interval. 
\item Length: Mean confidence interval length. 
\end{tablenotes}
\end{table*}

Table~\ref{tab:sim_eq} presents the simulation results regarding the empirical coverage probability and mean confidence interval length for 95\% confidence intervals in scenarios with equal propensity scores. Similar results for unequal propensity scores are available in the Supplementary Material.
In all cases, both the Neyman+Normal (N+N) and Sharp+Bootstrap (S+B) methods consistently achieve a nominal coverage of 95\%, with N+N displaying a more conservative approach than S+B. The Sharp+Normal (S+N) method generally exhibits under-coverage, except in Case 4, where it demonstrates conservative behavior, as shown in Table~\ref{table.sharp}.
Notably, S+B reduces the interval length by up to 8.7\% compared to the traditional N+N method, highlighting a significant efficiency gain. In large sample sizes, S+B and N+N show equivalent performance. In Case 4, S+N reduces the interval length by up to 11.4\% relative to N+N.

Overall, these simulation results support the theoretical advancements of the causal bootstrap. Therefore, in stratified randomized experiments, we recommend employing the causal bootstrap based on rank-preserving imputation, combined with the sharp variance estimator, for its robust and efficient performance.

\subsection{Paired Experiments}

In this section, we evaluate the finite-sample performance of the proposed methods in paired experiments. We compare two inference approaches: the normal approximation (Normal) and the causal bootstrap based on constant-treatment-effect imputation (Bootstrap).

Our data-generating process (DGP) involves three key components: sample size, the distribution of potential outcomes, and the relationship between $\yi[0]$ and $\yi[1]$.
We vary the number of pairs $M$ from 30 to 100. Potential outcomes $\yi[1]$ are generated from either a Gamma distribution $\text{Gamma}(1/10,10)$ or a Normal distribution $\mathcal N(0,1)$ independently. We also consider uniform and Pareto distributions, with those results included in the Supplementary Material.
We consider two cases of the relationship between $\yi[0]$ and $\yi[1]$:

\textbf{Case 1 (Additive)}: $\yi[0]=\yi[1]-\tau$ for $i=1,\cdots,n$. In this case, the estimated variance $\hat \sigma_{\p}^2$ is both unbiased and consistent. Results with $\tau=0$ are shown in the main text, while results with $\tau=1$ are similar and available in the Supplementary Material. 

\textbf{Case 2 (Independent)}: $\yi[0]$ is generated from the same distribution but independent of $\yi[1]$. In this case, the variance estimator $\hat \sigma_{\p}^2$ is conservative. 

\begin{table*}[!htbp]%
\caption{Simulation results in paired experiments.\label{table.pair}\hspace{\textwidth}}
\begin{tabular*}{\textwidth}{@{\extracolsep\fill}cccccccccc@{}}
\toprule
\multicolumn{2}{c}{} & \multicolumn{4}{c}{Gamma} & \multicolumn{4}{c}{Normal}\\
\cmidrule{3-6} \cmidrule{7-10}
& Case & \multicolumn{2}{c}{Case 1} & \multicolumn{2}{c}{Case 2} & \multicolumn{2}{c}{Case 1} & \multicolumn{2}{c}{Case 2}\\
\cmidrule{2-10}
$M$ & Method & Cover & Length & Cover & Length & Cover & Length & Cover & Length\\
\midrule
\multirow{2}*{30} & Neyman & 0.975 & 1.930 & 0.990 & 3.985 & 0.940 & 0.753 & 0.989 & 0.992 \\ 
& Bootstrap & 0.961 & 1.762 & 0.981 & 3.689 & 0.952 & 0.789 & 0.994 & 1.034 \\ 
\midrule
\multirow{2}*{40} & Neyman & 0.966 & 1.595 & 0.991 & 2.989 & 0.937 & 0.629 & 0.987 & 0.869 \\ 
& Bootstrap & 0.952 & 1.531 & 0.980 & 2.738 & 0.946 & 0.649 & 0.989 & 0.896 \\ 
\midrule
\multirow{2}*{50} & Neyman & 0.983 & 2.355 & 0.992 & 2.527 & 0.956 & 0.632 & 0.982 & 0.735 \\ 
& Bootstrap & 0.963 & 2.175 & 0.983 & 2.391 & 0.961 & 0.649 & 0.986 & 0.753 \\ 
\midrule
\multirow{2}*{60} & Neyman & 0.954 & 3.103 & 0.995 & 2.161 & 0.949 & 0.630 & 0.987 & 0.696 \\ 
& Bootstrap & 0.950 & 3.037 & 0.987 & 2.059 & 0.957 & 0.644 & 0.989 & 0.710 \\ 
\midrule
\multirow{2}*{70} & Neyman & 0.954 & 2.667 & 0.987 & 2.123 & 0.950 & 0.602 & 0.986 & 0.623 \\ 
& Bootstrap & 0.949 & 2.603 & 0.978 & 2.054 & 0.952 & 0.613 & 0.988 & 0.633 \\ 
\midrule
\multirow{2}*{80} & Neyman & 0.958 & 2.336 & 0.991 & 1.917 & 0.941 & 0.569 & 0.991 & 0.607 \\ 
& Bootstrap & 0.945 & 2.274 & 0.986 & 1.870 & 0.946 & 0.577 & 0.991 & 0.616 \\ 
\midrule
\multirow{2}*{90} & Neyman & 0.964 & 2.098 & 0.995 & 1.791 & 0.945 & 0.540 & 0.993 & 0.556 \\ 
& Bootstrap & 0.956 & 2.041 & 0.993 & 1.759 & 0.945 & 0.547 & 0.996 & 0.563 \\ 
\midrule
\multirow{2}*{100} & Neyman & 0.952 & 1.922 & 0.995 & 1.625 & 0.936 & 0.507 & 0.990 & 0.531 \\ 
& Bootstrap & 0.945 & 1.873 & 0.992 & 1.595 & 0.948 & 0.513 & 0.991 & 0.538 \\ 
\bottomrule
\end{tabular*}
\begin{tablenotes}
\item Case 1: Additive; Case 2: Independent.
\item Cover: Empirical coverage probability of $95\%$ confidence interval.
\item Length: Mean confidence interval length.
\end{tablenotes}
\end{table*}

Table~\ref{table.pair} indicates that both the normal approximation and the causal bootstrap consistently achieve the nominal coverage of 95\% across all cases. When potential outcomes are generated from the heavy-tailed Gamma distribution, the causal bootstrap reduces the interval length by 1.8\% to 8.9\% compared to the normal approximation. However, when outcomes are generated from the normal distribution, the causal bootstrap does not offer any advantage over the normal approximation. Similar results for the uniform distribution (which has no outliers) and the Pareto distribution (which is heavy-tailed) are available in the Supplementary Material.

Overall, in the presence of outliers or significant deviations from normality in the outcome distribution, we recommend using the causal bootstrap for enhanced efficiency compared to the normal approximation. In practical applications, density plots and quantile-quantile (Q-Q) plots can be utilized to identify outliers and assess the normality of observed outcomes before selecting the appropriate inference method. A real data analysis illustrating this approach is provided in the following section.

\section{Real Data Analysis}
\label{sec:rd}

\subsection{A Clinical Trial for Cannabis Cessation}

In this section, we apply the causal bootstrap method based on rank-preserving imputation in a stratified randomized experiment evaluating the effect of N-acetylcysteine (NAC) versus placebo (PBO) on cannabis cessation \citep{mcclure2014achieving}. 
Participants were stratified by site and self-reported tobacco smoking status, yielding $M=7$ strata. 
After excluding 38 individuals with missing outcomes and 2 with extreme outcomes, 214 participants remained, with 111 in the treatment group and 113 in the control group.
The primary outcome is the creatinine-normalized tetrahydrocannabinol level, which is the ratio of the cannabinoid level to the urine creatinine level and is typically regarded as a quantitative measure of marijuana metabolites \citep{huestis1998differentiating,schwilke2011differentiating}.
Figure~\ref{fig:clinical} shows that the distribution of this outcome is skewed.

\begin{figure*}[!htbp]
    \centerline{\includegraphics[width=\textwidth]{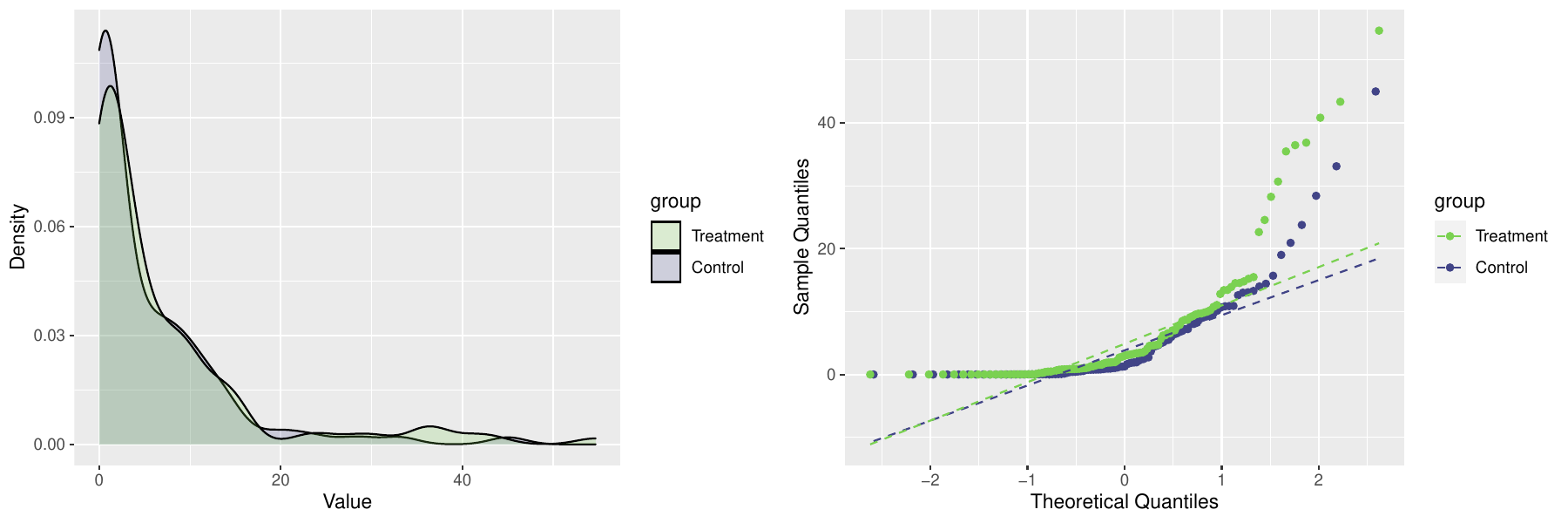}}
    \caption{Density plot and Q-Q plot for the outcomes from the clinical trial for cannabis cessation.\label{fig:clinical}\hspace{0.5\textwidth}}
\end{figure*}

We use three methods to construct 95\% confidence intervals for the ATE: the normal approximation with the Neyman-type variance estimator (Neyman+Normal), the normal approximation with the sharp variance estimator (Sharp+Normal), and the causal bootstrap based on rank-preserving imputation with the sharp variance estimator (Sharp+Bootstrap). The resulting intervals are $(-0.56, 4.27)$, $(-0.33, 4.04)$, and $(-0.30, 4.11)$, respectively, all encompassing 0, indicating insufficient evidence to support the assert NAC's effectiveness in enhancing cannabis cessation. Notably, Sharp+Normal and Sharp+Bootstrap reduced interval lengths by 9.4\% and 8.8\% compared to Neyman+Normal, respectively, demonstrating enhanced inference efficiency.

To evaluate the repeated sampling performance of the proposed methods, we generate semi-synthetic dataset imputing the counterfactual outcomes.
In stratum $m$, we use normal distributions $\mathcal N\{\bar Y_{[m]z},\varmhat[z]\}$ to impute the counterfactual potential outcomes under treatment and control, appling Neyman+Normal, Sharp+Normal, and Sharp+Bootstrap to construct 95\% confidence intervals. The empirical coverage probabilities are 0.992, 0.973, and 0.977, with mean confidence interval lengths of 4.88, 4.33, and 4.53, respectively. All methods reach the nominal confidence level of 95\%, with Sharp+Normal and Sharp+Bootstrap achieving reductions in interval lengths of 11.3\% and 7.1\% compared to Neyman+Normal, respectively.

\subsection{A Public Health Field Experiment in Mexico}
\label{sec:rd:pair}

In this section, we apply the causal bootstrap based on constant-treatment-effect imputation in a paired cluster experiment conducted in Mexico \citep{king2007politically}.
This experiment aims to investigate the effect of universal health insurance program on out-of-pocket health-care expenses , examining the treatment effect at the cluster level by treating each cluster as a unit.
The clusters were paired based on pre-treatment characteristics, including census demographics, poverty, education, and health infrastructure, resulting in a total of $M=50$ pairs.
In each pair, one cluster is randomized to the treatment group, which was encouraged to enroll in the universal health insurance program, Seguro Popular de Salud (SPS), while another cluster is assigned to the control group, which did not receive such encouragement.
The primary outcome is the mean out-of-pocket healthcare expenses in the cluster. Figure~\ref{fig:SPS} demonstrates that the distribution of the outcome is skewed and contains outliers.

\begin{figure*}[!htbp]
\centerline{\includegraphics[width=\textwidth]{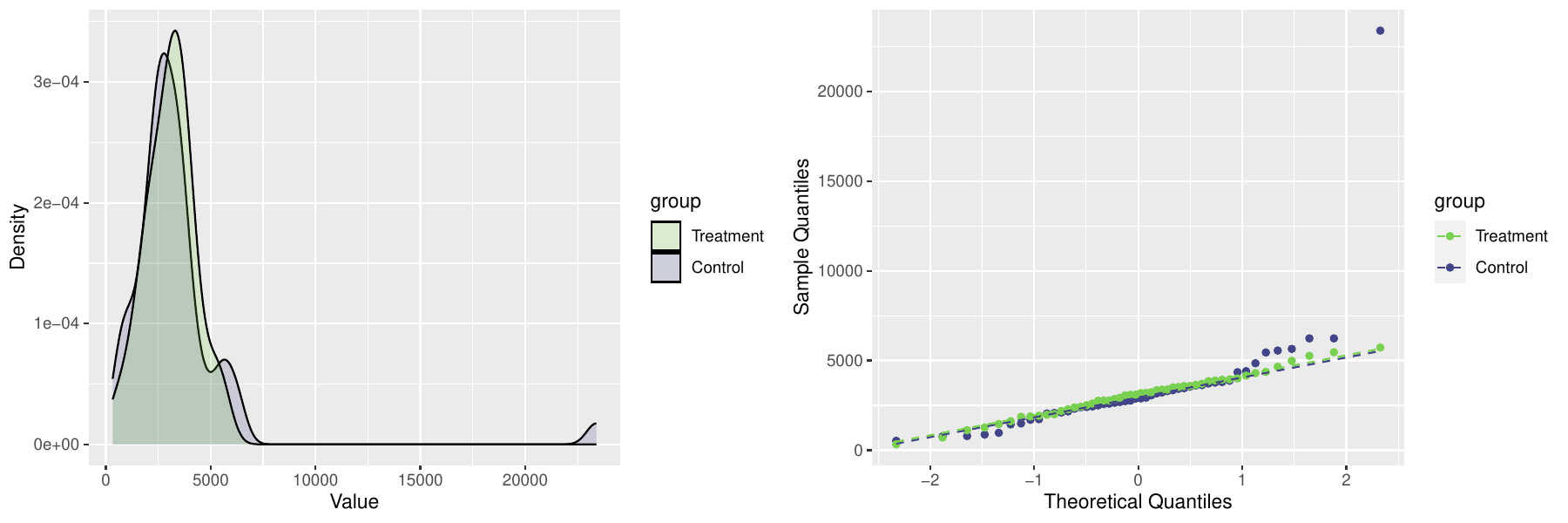}}
\caption{Density plot and Q-Q plot for the outcomes from the public health field experiment in Mexico.\label{fig:SPS}\hspace{0.5\textwidth}}
\end{figure*}

We utilize both the normal approximation and the causal bootstrap to construct 95\% confidence intervals, yielding $(-1232,561)$ and $(-1089,416)$, respectively. 
Both methods indicate that there is no significant evidence to demonstrate the effect of the universal health insurance program on out-of-pocket healthcare expenses.
Compared to the normal approximation, the causal bootstrap reduces the interval length by 16.1\%, showcasing an improvement in inference efficiency.

We assess the repeated sampling performance of the proposed methods by generating a semi-synthetic dataset and conducting simulations. The imputed population is generated based on the observed outcomes with a normal distribution. Specifically, within the $m$-th pair, $\ymaimp[1]=\zm \ymao+(1-\zm)\ymbo$, $\ymbimp[1]\sim \max[\mathcal N\{\ymaimp[1],s_{Y(1)}^2\},0]$, $\ymbimp[0]=(1-\zm)\ymao+\zm\ymbo$ and $\ymaimp[0]\sim \max[\mathcal N\{\ymbimp[0],s_{Y(0)}^2\},0]$, where $s_{Y(z)}^2$ is calculated with the observed outcomes of all strata in respective groups. This imputation method ensures that the potential outcomes are all non-negative, based on the non-negativity of medical expenses. We conduct 1000 simulations on this semi-synthetic dataset and apply both the normal approximation method and the causal bootstrap method to construct 95\% confidence intervals. Both the empirical coverage probabilities exceed 0.95, and the mean interval lengths are 1953 and 1810, respectively. In comparison to the normal approximation, the causal bootstrap reduces the interval length by 7.3\%. Overall, these findings underscore the effectiveness of the causal bootstrap method in paired randomized experiments.

\section{Conclusion and future work}
\label{sec:dis}

In this paper, we propose a sharp variance estimator and two causal bootstrap procedures in stratified randomized experiments. The first causal bootstrap method is based on rank-preserving imputation, featuring a second-order refinement over the normal approximation when there are at least two treated units and two control units in each stratum. The second causal bootstrap method is based on constant-treatment-effect imputation, making it applicable to paired experiments. Numerical results show that the proposed methods have significant advantages compared to the normal approximation with a Neyman-type variance estimator in the presence of outliers or non-normality. 

Since the sharp variance estimator and rank-preserving causal bootstrap in stratified randomization experiments require each stratum to contain at least two treated and two control units, they cannot be applied to paired experiments. In the process of preparing this paper for resubmission, we became aware of several recent papers that improve variance estimation and inference in the randomization-based inference framework \citep{mikhaeil2024sharp, chattopadhyay2024neymanian, brennan2024causal}. Although \Cref{proposition.pair} demonstrates that the commonly used variance upper bound $\sigma_{\p}^2$ is sharp within the class of unbiasedly estimable variance upper bounds, deriving sharp variance upper bound and induced causal bootstrap for paired experiments under other feasible classes remains an open problem and is left for further investigation in future work.

By leveraging covariate information, \citet{wang2020sharp} obtained a tighter sharp bound for the variance of the ATE estimator in completely randomized experiments. Investigating the development of a sharp variance estimator and causal bootstrap procedure in the presence of covariates represents a promising avenue for future research.

\citet{kosko2024fast} proposed a causal bag of little bootstraps to improve the computational efficiency of causal bootstrap methods while maintaining consistent estimates and reliable confidence interval coverage. Applying this approach to accelerate our methods would also be an interesting direction to explore.



\section*{Acknowledgments}
Dr. Liu was supported by the National Natural Science Foundation of China (12071242).
The information reported here results from secondary analyses of data from clinical trials conducted by the National Institute on Drug Abuse (NIDA). Specifically, data from NIDA-CTN-0053 (Achieving Cannabis Cessation-Evaluating N-Acetylcysteine Treatment) were included. NIDA databases and information are available at \url{http://datashare.nida.nih.gov}.
The authors would like to express their sincere gratitude to the editor, the associate editor and two reviewers for their insightful feedback and constructive suggestions, which have significantly improved the quality of this manuscript.

\section*{Supporting information}

Additional supporting information may be found in the
online version of the article at the publisher’s website.
The \texttt{R} package \texttt{CausalBootstrap} implementing our method is available at \href{https://github.com/yu-hao-yang/CausalBootstrap}{https://github.com/yu-hao-yang/CausalBootstrap}.

\bibliographystyle{apalike.bst}


\end{document}